\documentclass[10pt,final]{siamltex}
\usepackage{amsmath}
\usepackage{bm}
\usepackage{amssymb,version}
\usepackage{cases}
\usepackage{color}
\usepackage{float}
\usepackage{verbatim}
\usepackage{algorithm} 
\usepackage{microtype}
\usepackage{algpseudocode}
\usepackage{url}
\newtheorem{prop}{Proposition}

\usepackage{graphicx}
\usepackage{subfig}

\allowdisplaybreaks

\newcommand{\normmm}[1]{{\left\vert\kern-0.25ex\left\vert\kern-0.25ex\left\vert #1
    \right\vert\kern-0.25ex\right\vert\kern-0.25ex\right\vert}}
\newcommand{\ttt}[1]{(-\frac{t^{n+1}}{T})}

\begin{document}
   \title{A unified framework of energy-stable splitting exponential  integrators for damped Hamiltonian systems
\thanks{This work is supported by the National Natural Science Foundation of China (Grant Nos. 12501580, 12271302, 12131014, 12071471) and Shandong Provincial Natural Science Foundation for Outstanding Youth Scholar (Grant No. ZR2024JQ030).}}

\author{Lu Li\thanks{School of Mathematics (Zhuhai), Sun Yat-sen University, Zhuhai 519082, Guangdong, China. (Email: lilu86@mail.sysu.edu.cn, xuqq35@mail2.sysu.edu.cn)}
\and Xiaoli Li\thanks{School of Mathematics, Shandong University, Jinan 250100, Shandong, China. Email: xiaolimath@sdu.edu.cn}
\and Zaijiu Shang\footnotemark[4] \footnotemark[5]
\and Quanquan Xu\footnotemark[2]}

\maketitle
\begingroup
\renewcommand{\thefootnote}{\fnsymbol{footnote}}
\footnotetext[4]{Center for Mathematics and Interdisciplinary Sciences at Fudan University, Shanghai 200433, China. Email: zaijiu@simis.cn}
\footnotetext[5]{Shanghai Institute for Mathematics and Interdisciplinary Sciences, Shanghai 200433, China.}
\endgroup

\begin{abstract}
In this work, we study long-time numerical integration of Hamiltonian systems subject to linear perturbations. By introducing an energy-induced metric, we establish a straightforward, coordinate-free criterion for dissipativity that ensures the decay of the physical energy for a wide class of linearly perturbed Hamiltonian systems. Since the conservative and dissipative effects cannot always be merged into a single gradient-structured dissipation and classical energy-stable methods developed for gradient flows can not directly extend to this setting, we propose a unified framework of two efficient and energy-stable splitting exponential integrators (SEI) to separately handle the dissipative and conservative parts: SEISAV (SEI based on the scalar auxiliary variable) and SEILM (SEI based on Lagrange multiplier). The SEISAV scheme composes the exact damping subflow with an exponential integrator based on the SAV update for the Hamiltonian subflow and requires solving only a one-dimensional linear algebraic equation at each time step. We prove the unconditional discrete decay  for a modified energy that mirrors the continuous energy-dissipation mechanism. To enforce decay of the original energy rather than a modified one, we further develop SEILM by incorporating a Lagrange-multiplier formulation within the  splitting exponential framework, leading to only a nonlinear algebraic equation at each time step. Numerical experiments on representative linearly damped Hamiltonian models confirm the predicted convergence rates, energy stability, and competitive efficiency relative to state-of-the-art schemes, indicating that the proposed framework provides a simple and robust approach for simulating linearly perturbed Hamiltonian dynamics.
\end{abstract}

\begin{keywords}
  damped Hamiltonian systems,
    exponential integrators,
    energy-stable methods,
    structure-preserving methods,
    splitting methods
\end{keywords}

\begin{AMS}
 65P10,  
  37M15, 
  65L20,
 37J15
 \end{AMS}

\section{Introduction}

We consider the following Hamiltonian system with a linear perturbation,
\begin{equation}\label{eq:model}
  \dot z \;=\; S\,\nabla H(z)\;-\;D(t)\,z, 
\end{equation}
where the constant matrix $S$ is skew-symmetric or its symmetric part is negative semidefinite,  and  further impose an energy-compatible hypothesis  that makes it rigorously dissipative for general  energies. 
We use $\operatorname{sym}(A):=\tfrac12\big(A+A^\top\big)$, and $A\succeq 0$ denotes that $A$ is symmetric positive semidefinite (SPSD) and  \(A\preceq 0\) denotes that $\operatorname{sym}(A)$ is negative semidefinite. Let $z_\star$ be an equilibrium of the conservative part, i.e., $\nabla H(z_\star)=0$. Without loss of generality, we translate coordinates so that $z_\star$ is the origin (we continue to write $z$ for convenience). 
 Define the energy metric
\begin{equation*}\label{eq:PH}
  P_H(z)\;:=\;\int_0^1 \nabla^2 H\big(sz\big)\,ds. 
\end{equation*}
It  sends state to an effort satisfying the exact identity $\nabla H(z)=P_H(z)\,z$. 
We impose the following energy-compatible linear damping (ECLD) condition:
\begin{equation}\label{eq:ECLD}
  \operatorname{sym}\!\big(P_H(z)D(t)\big)\succeq 0
\end{equation}
for all $(z,t)$.
Multiplying both sides of \eqref{eq:model} with $\nabla H(z)$ leads to the following dissipation law
\begin{equation*}\label{eq:energy-law}
  \frac{d}{dt}H\!\big(z(t)\big)
  = \nabla H(z)^\top S\nabla H(z)-z^\top\operatorname{sym}\!\big(P_H(z)D(t)\big)z
  \;\leq\ 0.
\end{equation*}
In particular, if \(S^\top=-S\), then \(\nabla H(z)^\top S\nabla H(z)=0\), so the Hamiltonian part does no work and the dissipation is entirely due to the linear damping term.

Two important classes of damped Hamiltonian system are recovered as special cases.
(i) \emph{Momentum/velocity damping.} In canonical variables \(z=(q,p)^\top\) with
\(H(q,p)=\tfrac12 p^\top M p+V(q)\) and \(D(t)=\mathrm{diag}(0,\Gamma(t))\), one obtains
\[
\operatorname{sym}\!\big(P_H(z)D(t)\big)
=\operatorname{diag}\!\Big(0,\ \operatorname{sym}\!\big(M\Gamma(t)\big)\Big).
\]
Therefore, condition \eqref{eq:ECLD} reduces to \(\operatorname{sym}\!\big(M\Gamma(t)\big)\succeq 0\).
This holds, for example, when \(M\) is symmetric positive definite and \(\Gamma(t)\) is a scalar multiple of the identity, as in~\cite{liu2021dissipation}.
\noindent (ii) \emph{Quadratic energies.}
If \(H(z)=\tfrac12 z^\top K z\), then \(P_H(z)\equiv K\) and \eqref{eq:ECLD} becomes
\(\operatorname{sym}\!\big(KD(t)\big)\succeq 0\).
This condition is satisfied, for instance, when \(K\) is symmetric positive definite and \(D(t)\) is a scalar multiple of the identity, or when both \(K\) and \(D(t)\) are diagonal, as considered in~\cite{moore2021exponential}. 

Models of the form \eqref{eq:model} arise systematically after first-order reformulations and spatial semi-discretizations for many PDEs. For example,  underdamped mechanical systems such as Duffing oscillators and oscillator chains, with velocity damping acting only on the momentum channel, fit case (i)  \cite{liu2021dissipation}. The deterministic part of underdamped Langevin dynamics and linear dashpot models in elastic discretizations also fall within this class \cite{leimkuhler2024contraction}. 
The semi-discrete Korteweg-de Vries equations \cite{bhatt2021projected} and Burger's equations \cite{bhatt2016second} with linear velocity damping  fall into case (ii), as they have a quadratic energy that satisfies equation \eqref{eq:ECLD} and thus ensures a continuous decay law.

In the state-of-the-art, two main toolchains have been particularly effective for constructing structure-preserving discretizations of \eqref{eq:model}. The first employs Lawson (exponential) transformations  \cite{lawson1967generalized} to absorb the linear damping and maps \eqref{eq:model} to a time-dependent Poisson system. Then structure-preserving discretizations such as the discrete-gradient  methods or symplectic Runge-Kutta methods were applied to the transformed systems to develop methods that can preserve the correct decay of suitable invariants in the original variables under mild assumptions; see, e.g., \cite{moore2021exponential,bhatt2017structure}. The second one is the operator splitting method \cite{mclachlan2002splitting}: the Hamiltonian and dissipative subflows are solved separately and then composed to preserve qualitative structure. Representative results include methods that preserve conformal symplecticity  \cite{bhatt2016second,bhatt2017structure, modin2011geometric,  sun2005structure}, conformal multi-symplecticity \cite{moore2009conformal,moore2013conformal,bhatt2019exponential,cai2017modelling} and the energy dissipation \cite{liu2021dissipation}. In particular, the recent work \cite{liu2021dissipation} combines the discrete gradient idea and the splitting technique to develop energy-dissipation preserving methods for the damped Hamiltonian system falling within Case (i) . These methods demonstrate robust long-time stability, however, they are nonlinear and fully implicit.

Although energy-stable schemes for damped Hamiltonian systems \eqref{eq:model} exist, linearly implicit efficient methods remain scarce. Two such approaches were studied in \cite{uzunca2025linearly}, obtained by combining exponential transformations with Kahan’s method and with the polarized discrete gradient, respectively. However, these are restricted to polynomial Hamiltonians, and a rigorous proof of energy dissipation for the constructed scheme is not provided. 
This motivates linear and energy-stable methods for general non-polynomial conservative nonlinearities, with per-step computation reduced to a constant-coefficient linear solver, and even to a single one-dimensional algebraic equation. In this work, we construct such methods by combining operator splitting with the scalar auxiliary variable (SAV) \cite{shen2019new,shen2018scalar,li2022new} and the Lagrange multiplier (LM) methods \cite{cheng2020new,cheng2025unique}, where a scalar variable/Lagrange multiplier linked to the system’s energy, converts the nonlinear update into decoupled constant-coefficient linear solver and yields unconditional (modified/original) energy stability. Recent studies about SAV/LM focuses on i) seeking alternative scalar transforms to remove the lower-bound assumption on the nonlinear potential \cite{yang2020roadmap, liu2020exponential};  ii) improving the computational efficiency and constructing high-order methods \cite{huang2020highly, gong2020arbitrarily,kemmochi2022scalar}; iii) finding relaxation/energy-optimized corrections to improve the fidelity to the original energy dissipation for the SAV type methods \cite{cheng2020new, jiang2022improving, huang2026weighted,huang2024computationally}. These SAV/LM-based schemes have primarily been developed for either conservative Hamiltonian systems or purely dissipative gradient flows. They do not directly address problems of the form \eqref{eq:model}, where Hamiltonian dynamics are coupled with an additional linear dissipative perturbation, within an operator-splitting framework. It is important to highlight that the term \(-D(t)z\) cannot always be incorporated into an operator acting on \(\nabla H(z)\) without imposing restrictive assumptions, as demonstrated by the damped generalized Korteweg-de Vries (gKdV) example discussed in this paper. Consequently, energy-stable methods designed for purely gradient flows do not directly generalize to equation \eqref{eq:model}, even when a combined reformulation is feasible, it may not necessarily lead to improved efficiency \cite{liu2021dissipation}. This observation motivates the development of SAV/LM-based splitting integrators specifically designed for linearly perturbed Hamiltonian systems. The main contributions of this work are twofold:
\begin{itemize}
\item We reinterpret the SAV/LM-augmented formulation in a Poisson-system setting and establish the corresponding energy-stability property.
\item We develop two efficient dissipation-preserving integrators for system \eqref{eq:model}:
\begin{itemize}
  \item \textbf{SEISAV} (Splitting Exponential Integrator based on SAV): a second-order Strang splitting scheme in which the damping subflow is integrated exactly and the Hamiltonian substep is obtained by an exponential integrator based on SAV, which involves solving only a single scalar linear equation, and we establish a decay estimate for a modified discrete energy at each time step.
  \item \textbf{SEILM} (Splitting Exponential Integrator based on  Lagrange Multiplier):  a splitting method in which the Hamiltonian substep is obtained using an exponential integrator  based on the LM formulation, thereby enforcing dissipation of the original energy rather than a modified one. The resulting scheme requires solving a one-dimensional nonlinear equation for the scalar multiplier at each time step, while typically remaining low cost.
\end{itemize}
\end{itemize}

We organize the rest of the paper as follows. Section~\ref{sec:sav review} presents the reformulated SAV/LM framework. Section~\ref{sec:exponential Integrator}  introduces the exponential integrators based on the SAV/LM framework. Section~\ref{sec:schemes} introduces the two proposed splitting schemes. Section~\ref{sec:numerical example} presents numerical experiments demonstrating energy dissipation, convergence orders, and computational efficiency. In the last section, we  conclude the study.

\section{The SAV/LM reformulation}\label{sec:sav review}  
In this section, we reinterpret the augmented  equations of the SAV formulation and its LM variant within a Poisson framework. This viewpoint allows us to rederive the corresponding energy conservation and dissipation properties  of these methods for the following  semi-linear Hamiltonian-type system
\begin{equation}\label{eq:model-gradient flow}
  \dot z \;=\; S\,\nabla H(z)\;, \quad
  H(z) = \frac{1}{2} z^T M z + V(z),
\end{equation}
where $M\succ0$,  $S^\top=-S$  or \(S\preceq 0\), and \( V(z) \) is the nonlinear potential.

\subsection{The standard SAV approach}\label{standard sav}
Assume  \( V(z) \) is bounded from below so that \( V(z) + C > 0 \) for some constant \( C>0 \).  Define 
\[
r(t) = \sqrt{V(z) + C} > 0,
\]
and introduce an extended variable $\tilde{z}=[z^T,r]^T$.
Then,  equation \eqref{eq:model-gradient flow} coupled with 
\[
  \dot r = \frac{1}{2\sqrt{V(z)+C}}  \nabla V(z)^T  \dot z
\]
can be rewritten as the extended system  
\begin{equation}\label{Extendsys-SAV}
\dot{\tilde{z}}(t) = \tilde{S}(\tilde{z})\nabla \tilde{H}(\tilde{z}),
\end{equation}
where
\begin{equation*}
\tilde{S}(\tilde{z}) = \begin{bmatrix} 
S & \frac{S\nabla V(z)}{2\sqrt{V(z)+C}}\\
\frac{{\nabla V(z)}^TS}{2\sqrt{V(z)+C}} & \frac{{\nabla V(z)}^TS\nabla V(z)}{4(V(z)+C)}
\end{bmatrix}, \qquad
\tilde{H}(\tilde{z})= \frac{1}{2}z^TMz + r^2-C.
\end{equation*}

\begin{prop}\label{SAV}
The extended system \eqref{Extendsys-SAV} inherits the conservation/dissipation structure of \eqref{eq:model-gradient flow}, i.e.,
\begin{equation*}
\frac{d}{dt}\tilde H(\tilde z(t))
\left\{
\begin{aligned}
=0
&\quad \text{if } S^\top=-S,\\
\le 0
&\quad \text{if } S\preceq 0.
\end{aligned}
\right.
\end{equation*}

\end{prop}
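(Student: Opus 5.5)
The plan is to compute $\frac{d}{dt}\tilde H(\tilde z(t))$ directly via the chain rule and then exploit the factorized structure of $\tilde S(\tilde z)$. First compute the gradient of the extended energy:
\[
\nabla\tilde H(\tilde z)=\begin{bmatrix} Mz\\ 2r\end{bmatrix}.
\]
The chain rule then gives
\[
\frac{d}{dt}\tilde H(\tilde z(t))=\nabla\tilde H(\tilde z)^\top\dot{\tilde z}=\nabla\tilde H(\tilde z)^\top\tilde S(\tilde z)\nabla\tilde H(\tilde z),
\]
so the whole question reduces to the sign of this quadratic form in $\tilde S(\tilde z)$.

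The key observation is that $\tilde S$ factors as a congruence of $S$. Set
\[
b(z):=\frac{\nabla V(z)}{2\sqrt{V(z)+C}},\qquad B(z):=\begin{bmatrix} I & b(z)\end{bmatrix}\in\mathbb{R}^{n\times(n+1)}.
\]
Comparing blocks shows $\tilde S(\tilde z)=B(z)^\top S\,B(z)$. Indeed, the $(1,1)$ block is $S$, the $(1,2)$ block is $Sb$, the $(2,1)$ block is $b^\top S$, and the $(2,2)$ block is $b^\top S b=\nabla V^\top S\nabla V/(4(V+C))$. All of these match the given matrix.

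With this factorization,
\[
\frac{d}{dt}\tilde H=w^\top S\,w,\qquad w:=B(z)\nabla\tilde H(\tilde z)=Mz+2r\,b(z).
\]
Along the solution, $r=\sqrt{V(z)+C}$, so $w=Mz+\nabla V(z)=\nabla H(z)$. It is worth remarking that this recovers $\frac{d}{dt}\tilde H=\nabla H^\top S\nabla H$, but the sign argument does not actually need that identification; it only needs the congruence.

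The two cases then follow at once. If $S^\top=-S$, then $w^\top S w=0$ for every $w$, so $\tilde H$ is conserved. If $S\preceq 0$, meaning $\operatorname{sym}(S)$ is negative semidefinite, then
\[
w^\top S w=w^\top\operatorname{sym}(S)\,w\le 0,
\]
so $\tilde H$ is nonincreasing.

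The only real step is spotting the factorization $\tilde S=B^\top S B$. Everything else is routine. One detail needs care: the $(2,1)$ block must be $b^\top S$, not $b^\top S^\top$, and the paper's definition is indeed consistent with $B^\top S B$. This same congruence also shows that $\tilde S$ inherits skew-symmetry or negative semidefiniteness of its symmetric part from $S$, which is the Poisson-type interpretation the section is after. Finally, we should note that $\sqrt{V+C}>0$ by the lower-bound assumption, so $b$ is well defined.
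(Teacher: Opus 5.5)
Your proof is correct and follows the paper's argument: the paper also factors $\tilde S(\tilde z)=B(z)^\top S\,B(z)$ with $B(z)=[I\ \ a(z)]$. It then concludes from the skew-symmetry of $\tilde S$ when $S^\top=-S$, or from the negative semidefiniteness of its symmetric part when $S\preceq 0$, while you apply the same congruence to write the derivative as $w^\top S w$ with $w=B(z)\nabla\tilde H(\tilde z)$, which is equivalent.
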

\begin{proof}
Denote by 
\[
 a(z):=\frac{\nabla V(z)}{2\sqrt{V(z)+C}},\qquad
B(z):=\big[I\ \ a(z)\big].
\]
Then $\tilde{S}(\tilde{z})$ can be rewritten as 
\[
\tilde S(\tilde z):=
\begin{bmatrix} S & Sa(z)\\ a(z)^{\top}S & a(z)^{\top}Sa(z)\end{bmatrix}
= B(z)^{\top}S\,B(z).
\]

When $S$ is skew-symmetric,  $\tilde{S}(\tilde{z})$ is also skew-symmetric. Therefore, the extended Hamiltonian is conserved along solutions, i.e.,
\[
\frac{d}{dt}\tilde{H}(\tilde{z})=\nabla \tilde{H}(\tilde{z})^T\dot{\tilde{z}}(t)=\nabla \tilde{H}(\tilde{z})^T\tilde{S}(\tilde{z})\nabla \tilde{H}(\tilde{z})= 0.
\]

When $S\preceq 0$, we decompose $S$ by $S=S_s+S_a$, where $S_s$ and $S_a$ are the symmetric and skew-symmetric part of $S$ with the form

\[
S_s=\frac{S+S^T}{2}, \qquad S_a=\frac{S-S^T}{2}.
\]
Then, for the augmented system we have 
\[
\frac{d}{dt}\tilde H(\tilde z)
=\nabla\tilde H(\tilde z)^{\top}\,\tilde S(\tilde z)\,\nabla\tilde H(\tilde z)
=\nabla\tilde H^{\top}\,\tilde S_s\,\nabla\tilde H\le 0,
\]
since the the skew-symmetric part does not contribute and the symmetric part is also negative semidefinite.
\end{proof}

Based on the extended system \eqref{Extendsys-SAV}  and Proposition \ref{SAV}, it is straightforward to construct   linearly implicit unconditionally energy-stable schemes by applying a discrete-gradient type method to the extended system, together with an explicit pre-computation  of $\tilde S$.
In the conservative case where $\tilde S$ is skew-symmetric, symplectic Runge-Kutta methods are also applicable since $\tilde H$ is quadratic and such methods preserve quadratic invariants. A convenient choice is the Crank-Nicolson (implicit midpoint) discretization, where the midpoint rule is combined with a second-order explicit approximation of $\tilde S$; since the energy $\tilde H$ is quadratic, this update coincides with the averaged vector field discrete-gradient method \cite{celledoni2012preserving}. The SAV/CN scheme has the following form:
\begin{align*}
    \frac{z^{n+1} - z^{n}}{h}
        &= S \, \mu^{n+\frac{1}{2}}, 
        \\[10pt]
    \mu^{n+\frac{1}{2}}
        &= M \left( \frac{z^{n+1} + z^{n}}{2} \right)
        + \frac{r^{n+1} + r^{n}}{2\sqrt{V(\hat{z}^{\,n+\frac{1}{2}}) + C}}
        \nabla V\bigl(\hat{z}^{\,n+\frac{1}{2}}\bigr), 
        \\[10pt]
    r^{n+1} - r^{n}
        &= \frac{\nabla V\bigl(\hat{z}^{\,n+\frac{1}{2}}\bigr)^{T}}
        {2\sqrt{V(\hat{z}^{\,n+\frac{1}{2}}) + C}}
        \left( z^{n+1} - z^{n} \right).
\end{align*}
where $\hat{z}^{n+\frac{1}{2}}$ is a locally second-order approximation to $z^{n+\frac{1}{2}}$, 
for instance,
\begin{equation*}
\dfrac{\hat{z}^{\,n+\frac{1}{2}} - z^n}{h / 2}
= S\!\left(M \hat{z}^{\,n+\frac{1}{2}} + \nabla V(z^{n})\right).
\label{eq:Zbar_update}
\end{equation*}

\begin{prop}
\label{prop:SAV_CN_stability}
The discrete energy of SAV/CN method satisfies 
\begin{equation*}
\begin{split}
\widetilde{H}(z^{n+1}, r^{n+1}) - \widetilde{H}(z^{n}, r^{n})
& =h \left[{\nabla\tilde H\Big(\frac{\tilde z^n+\tilde z^{n+1}}{2}\Big)}\right]^{T} \tilde S(\hat{z}^{\,n+\frac{1}{2}})\,\nabla\tilde H\Big(\frac{\tilde z^n+\tilde z^{n+1}}{2}\Big)\\
& = h \big(\mu^{n+\frac{1}{2}}, S \mu^{n+\frac{1}{2}}\big), \\
\end{split}
\label{eq:SAV_energy_dissipation}
\end{equation*}
and the method is unconditionally energy-stable in the sense that 
\begin{equation*}
\widetilde{H}(z^{n+1}, r^{n+1})
\left\{
\begin{aligned}
=\widetilde{H}(z^{n}, r^{n})
&\quad \text{if } S^\top=-S,\\
\le \widetilde{H}(z^{n}, r^{n})
&\quad \text{if } S\preceq 0.
\end{aligned}
\right.
\end{equation*}

\end{prop}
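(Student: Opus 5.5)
The approach is to take the discrete inner product of the scheme with the discrete chemical potential, mirroring the continuous computation in Proposition~\ref{SAV}. The key fact is that $\tilde H$ is quadratic in $\tilde z=(z,r)$. For any quadratic function the exact midpoint identity holds:
\[
\tilde H(\tilde z^{n+1})-\tilde H(\tilde z^n)=\nabla\tilde H\Big(\tfrac{\tilde z^n+\tilde z^{n+1}}{2}\Big)^{T}(\tilde z^{n+1}-\tilde z^n).
\]
Concretely, this reads
\[
\tfrac12 (z^{n+1})^TMz^{n+1}-\tfrac12 (z^{n})^TMz^{n}=\Big(M\tfrac{z^{n+1}+z^n}{2}\Big)^T(z^{n+1}-z^n)
\]
and
\[
(r^{n+1})^2-(r^n)^2=(r^{n+1}+r^n)(r^{n+1}-r^n).
\]

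\textbf{Step 1: rewrite the energy difference.} Set $a:=a(\hat z^{n+1/2})=\nabla V(\hat z^{n+1/2})/\big(2\sqrt{V(\hat z^{n+1/2})+C}\big)$. The third equation of the scheme gives $r^{n+1}-r^n=a^T(z^{n+1}-z^n)$. Substituting this into the $r$-part of the identity, the total difference becomes
\[
\Big[M\tfrac{z^{n+1}+z^n}{2}+(r^{n+1}+r^n)\,a\Big]^T(z^{n+1}-z^n).
\]
The bracket equals $\mu^{n+1/2}$ by the second equation of the scheme, since $(r^{n+1}+r^n)a=\frac{r^{n+1}+r^n}{2\sqrt{V+C}}\nabla V$. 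The first equation then replaces $z^{n+1}-z^n$ by $hS\mu^{n+1/2}$, yielding $h(\mu^{n+1/2},S\mu^{n+1/2})$.

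\textbf{Step 2: obtain the $\tilde S$ form.} Write $w:=\nabla\tilde H\big(\tfrac{\tilde z^n+\tilde z^{n+1}}{2}\big)=\big(M\tfrac{z^n+z^{n+1}}{2},\,r^n+r^{n+1}\big)$. Using $B=[I\ a]$ as in the proof of Proposition~\ref{SAV}, we have $Bw=\mu^{n+1/2}$, and hence
\[
w^T\tilde S(\hat z^{n+1/2})w=w^TB^TSBw=(\mu^{n+1/2})^TS\mu^{n+1/2}.
\]
This gives the first equality. Equivalently, the scheme is exactly $\tilde z^{n+1}-\tilde z^n=h\tilde S(\hat z^{n+1/2})w$, which can be checked row by row.

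\textbf{Step 3: conclude the sign.} If $S^T=-S$, then $x^TSx=0$ for all $x$, so the energy is conserved. If $S\preceq 0$, then $x^TSx=x^T\operatorname{sym}(S)x\le 0$, so the energy is non-increasing. This holds for every $h>0$ and for any choice of $\hat z^{n+1/2}$, which gives unconditional stability.

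There is no real obstacle. The only point needing care is bookkeeping: checking that the factor conventions in $\tilde H=\tfrac12 z^TMz+r^2-C$ (gradient $2r$ in the $r$-component) match the $\tfrac{r^{n+1}+r^n}{2\sqrt{\cdot}}$ coefficient in $\mu^{n+1/2}$. This is exactly what makes $Bw=\mu^{n+1/2}$ hold.
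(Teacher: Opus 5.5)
Your proof is correct and follows the paper's route. The paper justifies the proposition by noting that, once $\tilde S$ is frozen at $\hat z^{\,n+\frac12}$, SAV/CN is the AVF (midpoint) discrete-gradient scheme for the quadratic energy $\tilde H$; you simply carry out that identity explicitly, including the check $\tilde z^{n+1}-\tilde z^{n}=h\,\tilde S(\hat z^{\,n+\frac12})\,w$ via $Bw=\mu^{n+\frac12}$, which the paper leaves implicit.
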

This proposition follows from the fact that the SAV/CN method precomputes $\hat{z}^{\,n+\frac{1}{2}}$, and then decays to the averaged vector field method. Therefore, it preserves the conservation/dissipation of the quadratic invariant $\widetilde{H}(z^{n}, r^{n})$ of the extended system \eqref{Extendsys-SAV}. Throughout the remainder of the paper, we adopt the Crank-Nicolson time discretization for all SAV-related formulations unless stated otherwise.

An advantage of the SAV-based approach over other linearly implicit methods such as the Kahan-based methods \cite{celledoni2012geometric} as well as the polarized gradient-based methods \cite{matsuo2001dissipative} is that the computation cost of SAV approach is one constant coefficient linear system across all iterations instead of a linear system with variable coefficient across each iteration. However, the pay of such an efficiency is the lost of the  symmetry. For unbounded \( V(z) \), it can be reformulated as a bounded term 
by employing a splitting technique \cite{kemmochi2022scalar}.

\subsection{The Lagrange multiplier approach}\label{sec:sav  Lagrange multiplier}
The standard SAV reformulation  assumes that the nonlinear potential \(V(z)\) is bounded from below so that the auxiliary variable is well defined, and  the associated discrete energy is usually a modified energy, rather than the original physical energy. To overcome these issues, Cheng et al.~\cite{cheng2020new} proposed the LM reformulation and demonstrated that it  preserves (or dissipates) the original energy exactly with the expense of solving an additional algebraic nonlinear equation.   In this work, we further reinterpret the LM augmented system within a Poisson framework and prove its corresponding energy conservation/dissipation properties

We introduce a nonlocal auxiliary function \(\eta(t)\), serving as a Lagrange multiplier, and extend the Hamiltonian system \eqref{eq:model-gradient flow} as
\begin{equation}\label{eq:eta-constraint}
\begin{split}
\dot z&= SMz+\eta(t)S\nabla V(z)^T,\\
\dot V(z) &= \eta(t) \nabla V(z)^T \dot z.
\end{split}
\end{equation}
If we set the initial condition \(\eta(0) = 1\), it is straightforward to verify that the new system \eqref{eq:eta-constraint} is equivalent to the original system  \eqref{eq:model-gradient flow}. 

By defining an extended variable \(\tilde{z} = [z^T, V]^T\), the above extended equation  \eqref{eq:eta-constraint} can be rewritten as the following Poisson system
\begin{equation}\label{Extendsys-LM-SAV}
\dot{\tilde{z}} = \tilde{S}(z)\nabla H(\tilde{z}),
\end{equation}
where
\begin{equation*}\label{SAV lagrange equation}
\tilde{S}(z) = \begin{bmatrix} 
S & \eta(t)S\nabla V(z)\\
 \eta(t)\nabla V(z)^TS &\eta(t)^2\nabla V(z)^TS\nabla V(z)
\end{bmatrix},  \quad
H(\tilde{z})= \frac{1}{2}z^TMz + V(z).
\end{equation*}
Based on the Poisson reformulation \eqref{Extendsys-LM-SAV}, it is easy to obtain the following results about energy stability.

\begin{prop}\label{SAV Lagrange multiplier}
The extended system \eqref{Extendsys-LM-SAV} inherits the conservation/dissipation structure of \eqref{eq:model-gradient flow}, i.e., 
\begin{equation*}
\frac{d}{dt} H(\tilde z(t))
\left\{
\begin{aligned}
=0
&\quad \text{if } S^\top=-S,\\
\le 0
&\quad \text{if } S\preceq 0.
\end{aligned}
\right.
\end{equation*}

\end{prop}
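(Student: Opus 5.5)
I will mirror the proof of Proposition~\ref{SAV}: first factor the extended structure matrix as a congruence of $S$, and then read off the sign of $\frac{d}{dt}H$ from the quadratic form $\nabla H^\top \tilde S \nabla H$. Set $b(z):=\eta(t)\nabla V(z)$ and $B(z):=\big[I\ \ b(z)\big]$. A direct block multiplication gives
\[
B(z)^\top S\,B(z)=\begin{bmatrix} S & \eta S\nabla V\\ \eta\nabla V^\top S & \eta^2\nabla V^\top S\nabla V\end{bmatrix}=\tilde S(z),
\]
so $\tilde S(z)$ is a congruence transform of $S$. This congruence immediately yields two facts. If $S^\top=-S$, then $\tilde S(z)^\top=B^\top S^\top B=-\tilde S(z)$. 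If $S\preceq 0$, then $\operatorname{sym}(\tilde S)=B^\top \operatorname{sym}(S) B$, and hence $x^\top\tilde S x=(Bx)^\top S(Bx)\le 0$ for every $x$.

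The main point needing care is how to interpret $\nabla H(\tilde z)$ in the extended variable $\tilde z=[z^\top,V]^\top$. In this variable $H$ is regarded as $\tfrac12 z^\top Mz+V$, with $V$ an independent coordinate. Its gradient is therefore $\nabla H(\tilde z)=[(Mz)^\top,\,1]^\top$. With this gradient, the Poisson form reproduces \eqref{eq:eta-constraint}. The first block row gives
\[
\dot z=SMz+\eta S\nabla V,
\]
and the second gives
\[
\dot V=\eta\nabla V^\top S Mz+\eta^2\nabla V^\top S\nabla V=\eta\nabla V^\top\dot z,
\]
which is exactly the constraint equation. I will verify this consistency explicitly. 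It is what justifies using $\frac{d}{dt}H(\tilde z)=\nabla H(\tilde z)^\top\dot{\tilde z}$, since by the chain rule in the extended coordinates this derivative equals $Mz\cdot\dot z+\dot V$.

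Substituting the Poisson form then gives
\[
\frac{d}{dt}H(\tilde z)=\nabla H(\tilde z)^\top\tilde S(z)\nabla H(\tilde z)=\big(B\nabla H\big)^\top S\big(B\nabla H\big),
\]
where $B\nabla H=Mz+\eta\nabla V(z)$. This quantity vanishes when $S$ is skew-symmetric, and it is $\le 0$ when $S\preceq 0$, because only the symmetric part of $S$ contributes to the quadratic form. This completes the argument. Finally, I will remark that, since $\eta(0)=1$ and the solution coincides with that of \eqref{eq:model-gradient flow}, the conserved or dissipated quantity is the original energy $H$ rather than a modified one.
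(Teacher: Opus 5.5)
Your proof is correct and takes the same route as the paper. The paper says only that the argument is analogous to Proposition~\ref{SAV}, namely the congruence factorization $\tilde S(z)=B(z)^\top S\,B(z)$ with $B=[I\ \ \eta\nabla V]$, after which one reads off the sign of $\nabla H^\top\tilde S\nabla H=(Mz+\eta\nabla V)^\top S\,(Mz+\eta\nabla V)$. Your explicit check that $\nabla H(\tilde z)=[(Mz)^\top,1]^\top$, with $V$ treated as an independent coordinate, reproduces the constraint equation is a detail the paper leaves implicit.
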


Similar to the original SAV approach, there is the following efficient Crank-Nicolson discretization, denoted by  LM/CN scheme:
\begin{align*}
    \frac{z^{n+1} - z^{n}}{h}
        &= S \, \mu^{n+\frac12}, 
        \\[6pt]
    \mu^{n+\frac12}
        &= M (\frac{z^{n+1} + z^{n}}{2})
          + \eta^{n+\frac12} \, \nabla V\bigl(\hat{z}^{\,n+\frac12}\bigr), 
          \\[6pt]
    V(z^{n+1}) - V(z^{n})
        &= \eta^{n+\frac12} \,
          \nabla V\bigl(\hat{z}^{\,n+\frac12}\bigr)^{T}(z^{n+1} - z^{n}). 
\end{align*}
where $\hat{z}^{n+\frac{1}{2}}$ is a local second-order  approximation of $z^{n+\frac{1}{2}}$. This scheme requires solving an extra nonlinear equation for the auxiliary variable $\eta^{n+\frac12}$ compared to the standard SAV, and its implementation details can be found in~\cite{cheng2020new}. 
The proof of Proposition \ref{SAV Lagrange multiplier}  is analogous to that of Propositions~\ref{SAV}. We also get the following proposition about the discrete energy.

\begin{prop}
\label{prop:LM-SAV_CN_stability}
The discrete energy of  LM  method satisfies 
\begin{equation*}
\begin{split}
H(\tilde z^{n+1}) - H(\tilde z^{n})
& =h \left[{\nabla H\Big(\frac{\tilde z^n+\tilde z^{n+1}}{2}\Big)}\right]^{T} \tilde S(\hat{z}^{\,n+\frac{1}{2}})\,\nabla H\Big(\frac{\tilde z^n+\tilde z^{n+1}}{2}\Big)\\
& = h \big(\mu^{n+\frac{1}{2}}, S \mu^{n+\frac{1}{2}}\big), \\
\end{split}
\label{eq:LM_energy_dissipation}
\end{equation*}
and the method is unconditionally energy-stable in the sense that 
\begin{equation*}
H(\tilde z^{n+1})
\left\{
\begin{aligned}
=H(\tilde z^{n})
&\quad \text{if } S^\top=-S,\\
\le H(\tilde z^{n})
&\quad \text{if } S\preceq 0.
\end{aligned}
\right.
\end{equation*}

\end{prop}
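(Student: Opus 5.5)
The plan is a direct discrete energy computation that mimics the continuous argument for Proposition~\ref{SAV Lagrange multiplier}. It only uses the three LM/CN relations and the structure of $S$.

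First, I split the energy increment into its quadratic part and its nonlinear part:
\[
H(\tilde z^{n+1})-H(\tilde z^{n})=\tfrac12 (z^{n+1})^{\top}Mz^{n+1}-\tfrac12 (z^{n})^{\top}Mz^{n}+V(z^{n+1})-V(z^{n}).
\]
For the quadratic part I use the symmetry of $M$:
\[
\tfrac12 (z^{n+1})^{\top}Mz^{n+1}-\tfrac12 (z^{n})^{\top}Mz^{n}=\Big(M\tfrac{z^{n+1}+z^{n}}{2}\Big)^{\top}(z^{n+1}-z^{n}).
\]
For the nonlinear part I substitute the third LM/CN equation:
\[
V(z^{n+1})-V(z^{n})=\eta^{n+\frac12}\nabla V(\hat z^{n+\frac12})^{\top}(z^{n+1}-z^{n}).
\]
Adding the two pieces and comparing with the definition of $\mu^{n+\frac12}$ gives $H(\tilde z^{n+1})-H(\tilde z^{n})=(\mu^{n+\frac12})^{\top}(z^{n+1}-z^{n})$. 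This is where the LM constraint does its job: the exact nonlinear increment is replaced by a linear one, with no discrete-gradient requirement on $V$ itself. Finally, the first equation gives $z^{n+1}-z^{n}=hS\mu^{n+\frac12}$, so the increment equals $h(\mu^{n+\frac12},S\mu^{n+\frac12})$.

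To obtain the first expression in the statement, I check that it equals the same quantity. Take $\tilde S$ evaluated at $\hat z^{n+\frac12}$ with multiplier $\eta^{n+\frac12}$, and interpret $\nabla H$ at the midpoint as the vector $\big[(M\tfrac{z^n+z^{n+1}}{2})^{\top},\,1\big]^{\top}$, the gradient with respect to the extended variable whose $V$-component has derivative $1$. As in the proof of Proposition~\ref{SAV}, factor $\tilde S=B^{\top}SB$ with $B=[I\ \ \eta^{n+\frac12}\nabla V(\hat z^{n+\frac12})]$. Then $B\nabla H=\mu^{n+\frac12}$, and the quadratic form collapses to $(\mu^{n+\frac12})^{\top}S\mu^{n+\frac12}$. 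The main point needing care is this interpretation, namely that the $V$-slot of the extended gradient is $1$ and the midpoint of that slot plays no role. I would state it explicitly as a remark.

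The sign conclusions then follow immediately. If $S^{\top}=-S$, then $x^{\top}Sx=0$ for every $x$, so energy is conserved. If $S\preceq 0$, then $x^{\top}Sx=x^{\top}\operatorname{sym}(S)x\le 0$, so energy is nonincreasing. Neither case places any restriction on $h$, which gives unconditional stability. I would also remark that the argument assumes the scalar nonlinear equation for $\eta^{n+\frac12}$ is solvable, which is implicit in the statement that the scheme is well defined.
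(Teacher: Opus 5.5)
Your argument is correct and is essentially the paper's proof. The paper views LM/CN as the midpoint (discrete-gradient) discretization of the extended Poisson system, with $\tilde S$ frozen at $\hat z^{\,n+\frac12}$ and the iterates kept on the manifold $V=V(z)$ by the multiplier. The discrete-gradient identity it then invokes for an energy that is quadratic in $z$ and affine in $V$ is exactly your split: $\bigl(M\tfrac{z^{n+1}+z^n}{2}\bigr)^{\top}(z^{n+1}-z^n)$ plus the LM-constraint increment. The only difference is order and explicitness: you obtain $h(\mu^{n+\frac12},S\mu^{n+\frac12})$ first, then recover the extended-form expression through $\tilde S=B^{\top}SB$ and $B\nabla H=\mu^{n+\frac12}$, a reduction the paper states without showing.
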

The LM/CN scheme can be interpreted as a constrained discrete-gradient discretization of the frozen extended system, where the structure matrix is fixed at $\tilde S(\hat z^{\,n+\frac12})$ and the numerical solution remains on the constraint manifold $V=V(z)$. This viewpoint yields Proposition~\ref{prop:LM-SAV_CN_stability}; see the Appendix for a detailed proof.

\section{The exponential  integrators based on the SAV/LM framework} \label{sec:exponential Integrator}
For the semilinear problem \eqref{eq:model-gradient flow}, the linear part is typically stiff and the nonlinear part  is Lipschitz continuous. Exponential  integrators integrate the stiff linear part exactly through matrix exponentials based on the variation-of-constants formula and
the nonlinear part can be approximated,  allowing for  bigger step size meanwhile guaranteeing stable approximations \cite{hochbruck2010exponential,li2026global}.

In what follows, we set $A=hSM$ and $f(z)=\nabla V(z)$. Two basic examples are the explicit exponential Euler method
\begin{equation*}\label{eq:expEuler}
z^{n+1}=e^{A}z^n+h\,\varphi_1(A)\,Sf(z^n),
\end{equation*}
and its implicit counterpart
\begin{equation*}\label{eq:impExpEuler}
z^{n+1}=e^{A}z^n+h\,\varphi_1(A)\,Sf(z^{n+1}).
\end{equation*}
More general exponential Runge-Kutta  methods \cite{hochbruck2010exponential, cox2002exponential} have stages
\[
Z_i=e^{c_i A}z^n+h\sum_{j<i} a_{ij}(A)\,Sf(Z_j),\qquad
z_{n+1}=e^{A}z^n+h\sum_{i} b_i(A)\,Sf(Z_i),
\]
with coefficients $a_{ij},b_i$ expressed in $\varphi_k(A)$, 
which are defined by  
\[
\varphi_0(z)=e^{z},\qquad
\varphi_1(z)=\frac{e^{z}-1}{z},\qquad
\varphi_{k+1}(z)=\frac{\varphi_k(z)-\tfrac{1}{k!}}{z}\ \ (k\ge 1).
\]

\subsection{The standard EISAV method} \label{standard ESAV}
When combined with the SAV framework, we treat the stiff linear part exactly, while applying the SAV/CN reformulation to the nonlinear potential. To the best of our knowledge, the exponential integrators based on SAV formulations have not been systematically studied, except for \cite{wang2025energy} in the context of Dirac equations. Here we introduce the EISAV scheme and prove its discrete energy stability. The resulting linearly implicit EISAV scheme is:
\begin{subequations}\label{eq:EISAV}
\begin{align}
    z^{n+1} &= e^{A} z^n + h \, \varphi_1(A) \, S \, f(\hat{z}^{n+\frac{1}{2}}, \bar{r}^{\,n+\frac{1}{2}}), \label{eq:EISAV_z_update} \\
    r^{n+1} &= r^n + \frac{1}{2} (g^n)^T (z^{n+1} - z^n). \label{eq:EISAV_r_update}
\end{align}
\end{subequations}
where
\begin{align*}
    g^n= \frac{\nabla V(\hat{z}^{n+\frac{1}{2}})}{\sqrt{V(\hat{z}^{n+\frac{1}{2}}) + C}}, 
    \bar{r}^{\,n+\frac{1}{2}} = \frac{r^{n+1} + r^n}{2}, 
    f(\hat{z}^{n+\frac{1}{2}}, \bar{r}^{\,n+\frac{1}{2}}) = \bar{r}^{\,n+\frac{1}{2}}g^n,
\end{align*}
and  $\hat{z}^{n+\frac{1}{2}}$ is a locally second-order  approximation to $z^{n+\frac{1}{2}}$, for instance,
\begin{equation*}
\hat{z}^{n+\frac{1}{2}}=e^{\frac{1}{2}A}z^{n}+\frac{1}{2}h\varphi(\frac{1}{2}A) S\nabla V(z^{n}).
\label{eq:Zbar_expupdate}
\end{equation*}

In what follows, we describe  the computational details for EISAV method. Substituting $r^{n+1}$ from equation~\eqref{eq:EISAV_r_update} into equation~\eqref{eq:EISAV_z_update}, we obtain
\begin{align}
z^{n+1}
= e^{A} z^n
+ h \varphi_1(A) S g^n r^n
- \frac{h}{4} \varphi_1(A) S g^n (g^n)^T z^n
+ \frac{h}{4} \varphi_1(A) S g^n (g^n)^T z^{n+1}.
\label{eq:z_substituted}
\end{align}
Dennoting by
\begin{equation*}
\tilde{b}
= e^{A} z^n
+ h \varphi_1(A) S g^n r^n
- \frac{h}{4} \varphi_1(A) S g^n (g^n)^T z^n,
\label{eq:b_vector}
\end{equation*}
and taking the inner product of both sides of \eqref{eq:z_substituted} with $g^n$ yields
\begin{equation*}
(g^n, z^{n+1})
= (g^n, \tilde{b})
+ \frac{h}{4} (g^n)^T \varphi_1(A) S g^n \, (g^n, z^{n+1}).
\label{eq:inner_product}
\end{equation*}
Solving the above scalar equation for $(g^n, z^{n+1})$, we obtain
\begin{equation}
(g^n, z^{n+1})
= \frac{(g^n, \tilde{b})}
{1 - \frac{h}{4} (g^n)^T \varphi_1(A) S g^n } .
\label{eq:solution}
\end{equation}
Substituting \eqref{eq:solution} back into \eqref{eq:z_substituted}, the update of $z^{n+1}$ is given by
\begin{equation}
z^{n+1}
= \tilde{b}
+ \frac{h}{4} \varphi_1(A) S g^n \, (g^n, z^{n+1}).
\label{eq:z_solution}
\end{equation}
The scalar auxiliary variable is then updated as
\begin{equation}
r^{n+1}
= r^n
+ \frac{1}{2} (g^n, z^{n+1})
- \frac{1}{2} (g^n, z^{n}) .
\label{eq:r_final}
\end{equation}
We summarize the one-step updating procedure of EISAV in Algorithm \ref{alg:EISAV}.
\begin{algorithm}[H]
\caption{One-step update of the EISAV scheme}
\label{alg:EISAV}
\begin{algorithmic}[1]
\Statex \textbf{Given} $(z^{n},r^{n})$, $h$, and precomputed $\hat z^{\,n+\frac12}$
\State Compute the scalar quantity $(g^n, z^{n+1})$ from \eqref{eq:solution}
\State Update the solution variable $z^{n+1}$ via \eqref{eq:z_solution}
\State Update the scalar auxiliary variable $r^{n+1}$ according to \eqref{eq:r_final}
\Statex \textbf{Return} $(z^{n+1},r^{n+1})$.
\end{algorithmic}
\end{algorithm}

\begin{prop}
\label{energy_conservation-ESAV}
The discrete energy of the linearly implicit EISAV scheme \eqref{eq:EISAV} satisfies
\begin{equation*}
\widetilde{H}(z^{n+1}, r^{n+1}) 
\begin{cases}
=~\widetilde{H}(z^{n}, r^{n}), & \text{if } S^\top=-S,\\[4pt]
\le~\widetilde{H}(z^{n}, r^{n}), & \text{if } S \preceq 0.
\end{cases}
\label{eq:ESAV_discrete_energy_law}
\end{equation*}
where the discrete energy function is defined by
\begin{equation*}
\widetilde{H}(z^n, r^n) = \frac{1}{2}(z^n)^{\!T} M z^n + (r^n)^2 - C.
\label{eq:ESAV_discrete_energy}
\end{equation*}
\end{prop}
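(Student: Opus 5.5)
The plan is to reduce the one-step energy increment to a quadratic form in a single vector, and then read its sign off a Cayley-transform structure of the matrix exponential. Write $\delta:=z^{n+1}-z^n$, $\bar z:=\frac12(z^{n+1}+z^n)$ and $\mu:=M\bar z+\bar r^{\,n+\frac12}g^n$. First, using \eqref{eq:EISAV_r_update} and $(r^{n+1})^2-(r^n)^2=2\bar r^{\,n+\frac12}(r^{n+1}-r^n)$, I will check that
\[
\widetilde H(z^{n+1},r^{n+1})-\widetilde H(z^n,r^n)=\bar z^{T}M\delta+\bar r^{\,n+\frac12}(g^n)^T\delta=\mu^T\delta .
\]
This is the discrete analogue of the chain rule in Proposition~\ref{prop:SAV_CN_stability}.

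Next I will express $\delta$ through $\mu$. Since $e^{A}-I=A\varphi_1(A)=h\varphi_1(A)SM$, \eqref{eq:EISAV_z_update} gives $\delta=h\varphi_1(A)S\big(Mz^n+\bar r^{\,n+\frac12}g^n\big)=h\varphi_1(A)S\big(\mu-\tfrac12M\delta\big)$. Using $I+\tfrac12A\varphi_1(A)=\tfrac12(I+e^{A})$, this becomes
\[
\tfrac12(I+e^{A})\,\delta=h\varphi_1(A)S\mu .
\]
To symmetrize, I will use $M\succ0$ and set $B:=hM^{1/2}SM^{1/2}$, so that $A=M^{-1/2}BM^{1/2}$ and every function of $A$ is conjugated in the same way. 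I also set $\nu:=M^{-1/2}\mu$ and $d:=M^{1/2}\delta$. Using $hS\mu=M^{-1/2}BM^{-1/2}\mu$ and $\varphi_1(B)B=e^{B}-I$, the relation becomes $(I+e^{B})\,d=2(e^{B}-I)\nu$, and the increment becomes $\mu^T\delta=\nu^T d$.

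Now put $Q:=e^{B}$. The key step is to parametrize by $y:=(I+Q)^{-1}\nu$, which requires $I+e^{B}$ to be invertible; this is also exactly the condition for the scalar denominator in \eqref{eq:solution} to be nonzero, i.e.\ for the scheme to be well defined. Since $Q$ commutes with $(I+Q)^{-1}$, we get $d=2(Q-I)y$ and $\nu=(I+Q)y$. Hence
\[
\nu^Td=2y^T(I+Q^T)(Q-I)y=2\big(\|Qy\|^2-\|y\|^2\big)+2\big(y^TQ^Ty-y^TQy\big)=2\big(\|Qy\|^2-\|y\|^2\big),
\]
because the two cross terms are equal scalars and cancel. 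If $S^\top=-S$, then $B$ is skew-symmetric, $Q$ is orthogonal, and the increment is exactly zero. If $S\preceq0$, then $\operatorname{sym}(B)=hM^{1/2}\operatorname{sym}(S)M^{1/2}\preceq0$. Then $\frac{d}{dt}\|e^{tB}y\|^2=2(e^{tB}y)^T\operatorname{sym}(B)e^{tB}y\le0$, so $\|Qy\|\le\|y\|$ and the increment is nonpositive.

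The main obstacle I expect is the invertibility of $I+e^{A}$, equivalently of $I+e^{B}$. This can fail, for example when $B$ has an eigenvalue $i\pi$. I would state it as the standing solvability assumption of the scheme and note that it holds for small $h$. A second option is to avoid the inverse altogether by a limiting or density argument. Beyond that, the only care needed is the bookkeeping of the conjugation by $M^{1/2}$ and the fact that functions of $B$ commute.
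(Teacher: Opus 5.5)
You have a genuine gap, though it is small and easy to repair. Everything up to the parametrization is correct. The increment is $\mu^{T}\delta$, the relation $(I+e^{B})d=2(e^{B}-I)\nu$ holds, and once $\nu=(I+Q)y$ and $d=2(Q-I)y$, your computation giving $2(\|Qy\|^{2}-\|y\|^{2})$ is right.

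The gap is in how you produce $y$. You define it as $(I+Q)^{-1}\nu$ and propose to make invertibility of $I+e^{A}$ a standing solvability assumption. You justify this by claiming it is exactly the condition for the denominator in \eqref{eq:solution} to be nonzero, and that claim is false. Since $h\varphi_1(A)S=(e^{A}-I)M^{-1}$, writing $\tilde g=M^{-1/2}g^{n}$ the denominator equals $1-\tfrac14\,\tilde g^{T}(Q-I)\tilde g$. This is $\ge 1$ whenever $\|Q\|\le 1$, which holds under both hypotheses on $S$. So EISAV is uniquely solvable for every $h>0$. Meanwhile $I+e^{A}$ can be singular: take $M=I$, $S=\begin{pmatrix}0&1\\-1&0\end{pmatrix}$ and $h=\pi$, so that $e^{A}=-I$. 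The proposition is unconditional in $h$, so restricting to small $h$ proves a weaker statement. The limiting argument you mention as an alternative is not carried out.

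The missing observation is that no inverse is needed, because the scheme gives you $y$ directly. Put $w:=z^{n}+M^{-1}\bar r^{\,n+\frac12}g^{n}$. Using $h\varphi_1(A)S=(e^{A}-I)M^{-1}$, the update reads $z^{n+1}+M^{-1}\bar r^{\,n+\frac12}g^{n}=e^{A}w$. Hence $\delta=(e^{A}-I)w$ and $\mu=\tfrac12M(I+e^{A})w$ hold identically. Then $y:=\tfrac12M^{1/2}w$ satisfies $d=2(Q-I)y$ and $\nu=(I+Q)y$ for every $h$.

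This $w$ is precisely the shifted variable $w^n=z^n+\tilde f^{\,n+\frac12}$ in the paper's proof. The paper completes the square to write the increment as $\tfrac12 w^{T}(E^{T}ME-M)w$ and then invokes the Appendix lemma on $E^{T}ME-M$. Your $M^{1/2}$-conjugation and the contraction $\|e^{B}y\|\le\|y\|$ express the same fact, since $E^{T}ME-M=M^{1/2}(Q^{T}Q-I)M^{1/2}$.

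You could also rescue your own route in either of two ways. First, the kernel of $[\,I+Q,\;-2(Q-I)\,]$ has the dimension of the state space, so it coincides with the image of the injective map $y\mapsto(2(Q-I)y,(I+Q)y)$. Second, a continuity argument in $h$ with $g^{n}$ held fixed works, because the denominator stays $\ge 1$ and the bad step sizes form a discrete set. With any of these fixes your proof is complete and agrees with the paper's.
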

The proof of this proposition can be found in the Appendix.

\subsection{The EILM approach} \label{sec: LM exponential Integrator}

Using the same notation of \(A\) and \(\varphi_1\), we give the following EILM scheme:
\begin{equation}\label{eq:EI_LM_z}
\begin{split}
z^{n+1}
 &= e^{A} z^n
   + h\, \varphi_1(A)\, S\, \eta^{n+\frac12}\, \nabla V(\hat{z}^{\,n+\frac12}) \\
V(z^{n+1})
 &= V(z^n)
   +  \eta^{n+\frac12}\,
     \nabla V(\hat{z}^{\,n+\frac12})^\top
(z^{n+1}-z^n),
\end{split}
\end{equation}
where  $\hat{z}^{n+\frac{1}{2}}$ is a locally second-order approximation to $z^{n+\frac{1}{2}}$, and \(\eta^{n+\frac12}\) is a scalar parameter that can be obtained by solving a  one-dimensional nonlinear equation.

Define the intermediate variables
\begin{equation}
\label{qandp}
p^{n+1} = e^{A} z^n,\quad
q^{n+1} = h\, \varphi_1(A)\, S\, \nabla V\!\left(\hat{z}^{\,n+\frac12}\right),
\end{equation}
then we can rewrite $z^{n+1}$ by the following relation
\begin{equation}
\label{EILMZn}
z^{n+1} = p^{n+1} + \eta^{n+\frac12} q^{n+1}.
\end{equation}
Substituting equation \eqref{EILMZn} into the second line of \eqref{eq:EI_LM_z} yields the following  nonlinear equation about $\eta^{n+\frac12}$:
\begin{equation}
\label{etainEILM}
 V(p^{n+1} + \eta^{n+\frac12} q^{n+1}) - V(z^n) 
= \eta^{n+\frac12} \nabla V(\hat{z}^{\,n+\frac12})^\top 
\bigl(p^{n+1} + \eta^{n+\frac12} q^{n+1} - z^n\bigr).   
\end{equation}
By solving \eqref{etainEILM}, we get $\eta^{n+\frac12}$. Then we can update $z^{n+1}$ from \eqref{EILMZn}. We summarize the one-step update of EILM scheme in Algorithm \ref{alg:EILM}.
\begin{algorithm}[H]
\caption{One-step update of the EILM scheme}
\label{alg:EILM}
\begin{algorithmic}[1]
\Statex \textbf{Given} $(z^{n},V(z^{n}))$, $h$, and precomputed $\hat z^{\,n+\frac12}$
\State Compute $p^{n+1}$, $q^{n+1}$ from equation \eqref{qandp}
\State Solve for $\eta^{n+\frac{1}{2}}$ via nonlinear equation \eqref{etainEILM}
\State Update $z^{n+1}$ according to equation \eqref{EILMZn}
\Statex \textbf{Return} $(z^{n+1},V(z^{n+1}))$.
\end{algorithmic}
\end{algorithm}
\begin{prop}
\label{energy_conservation-LM-ESAV}
 The discrete energy of the linearly implicit scheme \eqref{eq:EI_LM_z} satisfies 
\begin{equation*}
H(z^{n+1}) 
\begin{cases}
= ~H(z^{n}),& \text{if } S^\top=-S,\\[4pt]
\le~H(z^{n}), & \text{if } S \preceq 0.
\end{cases}
\label{eq:discrete_energy_law}
\end{equation*}
where the discrete energy is 
\begin{equation*}
H(z^{n}) = \frac{1}{2}(z^n)^{\!T} M z^n + V(z^n).
\label{eq:EILM_discrete_energy}
\end{equation*}
\end{prop}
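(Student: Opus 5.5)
The plan is to write the one-step energy increment as a quadratic form in a single vector, and then reduce it, via a similarity transformation, to comparing $\|e^{B}\nu\|$ with $\|\nu\|$ for a matrix $B$ that is skew-symmetric or has negative semidefinite symmetric part.

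\textbf{Step 1: increment form.} Set $g:=\nabla V(\hat z^{\,n+\frac12})$, $\eta:=\eta^{n+\frac12}$ and $\Delta:=z^{n+1}-z^n$. Since $A=hSM$, we have $e^{A}-I=A\varphi_1(A)=h\varphi_1(A)SM$, because $\varphi_1(A)$ commutes with $A$. The first line of \eqref{eq:EI_LM_z} therefore becomes
\[
\Delta=h\,\varphi_1(A)\,S\,\mu,\qquad \mu:=Mz^n+\eta\, g .
\]
This is the exponential analogue of the SAV/CN relation $\Delta=hS\mu^{n+\frac12}$.

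\textbf{Step 2: energy difference as a quadratic form.} Since $M$ is symmetric,
\[
\tfrac12 (z^{n+1})^{T}Mz^{n+1}-\tfrac12 (z^{n})^{T}Mz^{n}=\Delta^{T}Mz^n+\tfrac12\Delta^{T}M\Delta .
\]
By the discrete constraint in the second line of \eqref{eq:EI_LM_z}, $V(z^{n+1})-V(z^n)=\eta\, g^{T}\Delta$. Adding the two gives
\[
H(z^{n+1})-H(z^n)=\Delta^{T}\mu+\tfrac12\Delta^{T}M\Delta .
\]
The nonlinear potential thus enters only through $\mu$, and no property of $V$ is needed beyond the constraint.

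\textbf{Step 3: symmetrization.} Using $M\succ0$, introduce
\[
L:=M^{1/2}SM^{1/2},\qquad B:=hL,\qquad \nu:=M^{-1/2}\mu .
\]
Then $A=hSM=M^{-1/2}BM^{1/2}$, so $\varphi_1(A)=M^{-1/2}\varphi_1(B)M^{1/2}$. Consequently
\[
\Delta=M^{-1/2}\varphi_1(B)\,B\,\nu=M^{-1/2}(e^{B}-I)\nu .
\]
Substituting into Step 2, with $E:=e^{B}$, the increment becomes
\[
\nu^{T}(E-I)\nu+\tfrac12\|(E-I)\nu\|^2=\tfrac12\|E\nu\|^2-\tfrac12\|\nu\|^2 .
\]
This identity follows by expanding the square and noting that the cross terms $\nu^{T}E\nu$ cancel.

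\textbf{Step 4: the two cases.} If $S^\top=-S$, then $L$ is skew-symmetric, so $E$ is orthogonal and the increment is zero. If $S\preceq 0$, then $\operatorname{sym}(L)=M^{1/2}\operatorname{sym}(S)M^{1/2}\preceq0$. Hence $\frac{d}{dt}\|e^{tB}\nu\|^2=2(e^{tB}\nu)^{T}\operatorname{sym}(B)\,e^{tB}\nu\le0$, so $\|E\nu\|\le\|\nu\|$ and the increment is nonpositive.

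\textbf{Main obstacle.} The delicate step is Step 3. Working directly with $\varphi_1(A)^{T}S^{T}$ and $\varphi_1(A)^{T}M\varphi_1(A)$ gives no visible sign. The key observation is that the $M^{1/2}$-similarity turns the non-normal matrix $A=hSM$ into $B$, which is skew-symmetric or dissipative in the Euclidean norm, and that the exponential-Euler update is exactly $\nu\mapsto E\nu$ in these coordinates. Care is needed to justify $\varphi_1(A)=M^{-1/2}\varphi_1(B)M^{1/2}$ (power series) and that $\varphi_1(A)$ commutes with $A$. Existence of a real root $\eta$ of \eqref{etainEILM} is assumed, as in the statement of the proposition.
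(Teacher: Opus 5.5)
Your proof is correct and is essentially the paper's argument written in symmetrized coordinates. The paper shifts to $w^n=z^n+M^{-1}\eta^{n+\frac12}\nabla V(\hat z^{\,n+\frac12})$, so that $z^{n+1}+M^{-1}\eta^{n+\frac12}\nabla V(\hat z^{\,n+\frac12})=e^{A}w^n$. It then writes the increment as $\tfrac12 (w^n)^\top\big((e^{A})^\top M e^{A}-M\big)w^n$ and concludes with Lemma~\ref{lem:exp_congruence}. In your notation $\mu=Mw^n$ and $\nu=M^{1/2}w^n$, so your $\tfrac12\|e^{B}\nu\|^2-\tfrac12\|\nu\|^2$ is exactly that quadratic form, and your Step~4 is that lemma after congruence by $M^{1/2}$. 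The only difference is that you need $M\succ0$ to form $M^{1/2}$, whereas the paper's lemma needs only symmetric $M$ (plus invertibility for $M^{-1}$); this is immaterial under the standing assumption $M\succ0$.
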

The proof of this proposition can be found in the Appendix.
\section{The two splitting schemes based on EISAV and EILM}\label{sec:schemes}
For the dissipative problem \eqref{eq:model}, schemes developed for gradient flows are not directly applicable in general, since the conservative and dissipative effects cannot always be merged into a single gradient-structured dissipation. In this section, we propose two efficient and energy-stable  splitting schemes based on exponential integrators. This approach separates the Hamiltonian subflow from the linear damping subflow, allowing the latter to be treated exactly and the former to be solved by a structure-preserving integrator. Specifically, we decompose \eqref{eq:model} into
\begin{equation*}\label{eq:split}
  \dot z = f^{[1]}(z):=S(z)\nabla H(z) 
  \quad\text{and}\quad
  \dot z = f^{[2]}(z,t):=-D(t)z .
\end{equation*}
The linear subflow admits the exact evolution operator
\begin{equation*}\label{eq:lin-evo}
 z(t_{n+1})=\Psi^{(t_n,t_{n+1})}z(t_{n}),  \quad\text{with}\quad  \Psi^{(t_n,t_{n+1})} := \exp\!\Big( -\!\int_{t_n}^{t_{n+1}} D(\tau)\,d\tau \Big).
\end{equation*}
A second-order Strang splitting step is given by the symmetric composition
\begin{equation*}\label{eq:strang}
  z^{n+1}
  \;=\; \Psi^{(t_n+\frac h2,\,t_{n+1})}\, 
         \Phi^{[H]}_{h}\!\Big(\Psi^{(t_n,\,t_n+\frac h2)}\,z^n \Big),
\end{equation*}
where $\Phi^{[H]}_{h}$ denotes a one-step integrator for the Hamiltonian subflow $f^{[1]}$. 
In what follows, we give two choices of \(\Phi^{[H]}_h\) based on the two exponential integrators introduced in Section \ref{sec:exponential Integrator}.
\subsection{Splitting EISAV method}\label{subsec:sesav}
We propose a second-order splitting scheme in which the Hamiltonian subflow is integrated by a linearly implicit EISAV step. We keep the auxiliary variable \(r=\sqrt{V(z)+C}\) unchanged during the computation of the linear subflow, i.e., \(\dot r=0\), and obtain the following one-step splitting EISAV (SEISAV) scheme:
\begin{align}
 &(z^-,r^-)= \big(\Psi^{(t_n,\,t_n+\frac h2)}z^n,\ r^n\big), \label{eq:sesav:half-in-left}
\\[1mm]
 & z^{+} = e^{A} z^- + \frac h2\,\varphi_1(A)S(r^+ + r^-)\tilde g , \quad
   r^+ - r^- = \frac{1}{2}\,\tilde g^\top (z^+ - z^-),\label{eq:sesav:core-left}
   \\[1mm]
 & (z^{n+1},r^{n+1}) = \big(\Psi^{(t_n+\frac h2,\,t_{n+1})}z^{+} ,\ r^{+}\big). \label{eq:sesav:half-out-left}
\end{align}
where \(r=\sqrt{V(z)+C}\), \(g(z)=\frac{\nabla V(z)}{\sqrt{V(z)+C}}\), and \(\tilde g := g(\hat z^{\,n+\frac12})\)
with  \(\hat z^{\,n+\frac12}\) a locally second-order predictor of \(z(t_{n+\frac12})\) in the Hamiltonian step. The computation procedure of SEISAV method is shown in the following algorithm.
\begin{algorithm}[H]
\caption{One-step update of the SEISAV scheme}
\label{alg:SEISAV}
\begin{algorithmic}[1]
\Statex \textbf{Given} $(z^{n},r^{n})$, $h$, and precomputed $\hat z^{\,n+\frac12}$
\State Compute the exact flow of the linear part from \eqref{eq:sesav:half-in-left}
\State Update the Hamiltonian part  \eqref{eq:sesav:core-left} via Algorithm \ref{alg:EISAV}
\State Obtain the final update of all variables according to \eqref{eq:sesav:half-out-left}
\Statex \textbf{Return} $(z^{n+1},r^{n+1})$.
\end{algorithmic}
\end{algorithm}

\subsection{Splitting EILM method}\label{subsec:selmsav}
We next construct a splitting scheme that preserves (or dissipates) the original energy \(H(z)=\tfrac12 z^\top Mz + V(z)\) by using the EILM formulation. As stated in SEISAV, the linear dissipative subflow is solved exactly; on this subflow we keep the scalar potential variable \(V\) unchanged, i.e., \(\dot V=0\). We then get the following  one-step splitting EILM (SEILM) scheme: 
\begin{align}
 &(z^-,V^-) = \big(\Psi^{(t_n,\,t_n+\frac h2)}z^n,\ V(z^{n})\big), \label{eq:slmsav:half-in}
\\[1mm]
 &z^{+}= e^{A} z^- + h\,\varphi_1(A)\,S\tilde g\eta^{n+\frac{1}{2}},\quad
   V^{+}-V^- = \eta^{n+\frac{1}{2}}\,{\tilde g}^\top (z^{+} -z^-), \label{eq:selmsav:core}
   \\[1mm]
 &(z^{n+1},V(z^{n+1})) = \big(\Psi^{(t_n+\frac h2,\,t_{n+1})}z^{+} ,\ V^{+}\big). \label{eq:slmsav:half-out}
\end{align}
where \(g(z)={\nabla V(z)}\), and \(\tilde g := g(\hat z^{\,n+\frac12})\) with $z^{\,n+\frac{1}{2}}$ is a locally second-order approximation of $z^{n+1/2}$.
The computation procedure of SEILM method is presented in Algorithm \ref{alg:SEILM}.
\begin{algorithm}[H]
\caption{One-step update of the SEILM scheme}
\label{alg:SEILM}
\begin{algorithmic}[1]
\Statex \textbf{Given} $(z^{n},V(z^{n}))$, $h$, and precomputed $\hat z^{\,n+\frac12}$
\State Compute the exact flow of the linear damping part from \eqref{eq:slmsav:half-in}
\State Update the Hamiltonian part  \eqref{eq:selmsav:core} via Algorithm \ref{alg:EILM}
\State Obtain the final update of all variables according to \eqref{eq:slmsav:half-out}
\Statex \textbf{Return} $(z^{n+1},V(z^{n+1}))$.
\end{algorithmic}
\end{algorithm}

\subsection{Energy stability}\label{subsec:props2}
\begin{lemma}\label{lemma-SND}
Let $M\succ0$  and 
$D(t)$ be a matrix such that $\operatorname{sym}(M D(t))\succeq 0$ for $ t \in [t_a,t_b]$.
Then, the solution of the following matrix differential equation 
\[
\dot\Gamma(t) = -D(t)\,\Gamma(t), \qquad \Gamma(t_a) = I,
\]
is invertible and satisfies 
\begin{equation*} \label{eq:gamma-M-gamma}
\begin{split}\begin{cases}
\Gamma(t)^{\top} M \Gamma(t) - M &\preceq 0,\\
M - \Gamma(t)^{-T} M \Gamma(t)^{-1} & \preceq 0.
\end{cases}
\end{split}
\end{equation*}
\end{lemma}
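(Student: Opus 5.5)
Differentiate the quadratic form along the flow. Set $Q(t):=\Gamma(t)^\top M\Gamma(t)$. Since $\dot\Gamma=-D\Gamma$, we get
\[
\dot Q(t) = -\Gamma^\top D^\top M\Gamma - \Gamma^\top M D\Gamma = -2\,\Gamma(t)^\top \operatorname{sym}\!\big(MD(t)\big)\Gamma(t).
\]
This uses the symmetry of $M$, so that $(MD)^\top = D^\top M$. By hypothesis $\operatorname{sym}(MD(t))\succeq 0$, so $\dot Q(t)\preceq 0$ in the sense that $x^\top\dot Q(t)x\le 0$ for every fixed $x$. Integrating $x^\top Q(s)x$ from $t_a$ to $t$, and using $Q(t_a)=M$, gives $x^\top(\Gamma(t)^\top M\Gamma(t)-M)x\le 0$. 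This is the first inequality. I will assume $D$ is continuous (or at least integrable) on $[t_a,t_b]$, so that $\Gamma$ exists and is absolutely continuous.

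Invertibility comes next, and it is needed before the second inequality makes sense. I will use Liouville's formula: $\det\Gamma(t)=\exp\!\big(-\int_{t_a}^t \operatorname{tr}D(\tau)\,d\tau\big)\neq 0$. Alternatively, $\Gamma(t)^{-1}$ is the solution of $\dot Y = YD$ with $Y(t_a)=I$, and $\frac{d}{dt}(Y\Gamma)=YD\Gamma-YD\Gamma=0$.

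For the second inequality, fix $y$ and put $x=\Gamma(t)^{-1}y$. The first inequality gives $y^\top M y = x^\top\Gamma^\top M\Gamma x\le x^\top Mx = y^\top \Gamma^{-\top}M\Gamma^{-1}y$. Hence $M-\Gamma^{-\top}M\Gamma^{-1}\preceq 0$. This is a congruence transformation, so it needs no commutativity.

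I expect no real obstacle. The only points needing care are the symmetrization in $\dot Q$ and the interpretation of $\preceq$ for symmetric matrices, which is how it is used here since both matrices involved are symmetric. Positivity of $M$ is used only through its symmetry, so $M\succ0$ is stronger than needed.
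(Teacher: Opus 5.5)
Your proof is correct and follows the paper's argument essentially step for step. You differentiate $\Gamma^\top M\Gamma$ to get $-2\Gamma^\top\operatorname{sym}(MD)\Gamma\preceq 0$, integrate from $t_a$ using $\Gamma(t_a)=I$, and obtain the second inequality by congruence with $\Gamma(t)^{-1}$; the one addition is that you justify invertibility (via Liouville's formula or the equation $\dot Y = YD$), which the paper simply asserts.
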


\begin{proof}
Defining $P(t) = \Gamma(t)^{\top} M \Gamma(t)$ and differentiating it, we obtain 
\[
\begin{aligned}
\dot P(t)
&= \dot\Gamma(t)^{\top} M \Gamma(t)
  + \Gamma(t)^{\top} M \dot\Gamma(t) \\
&= \bigl(-D(t)\Gamma(t)\bigr)^{\top} M \Gamma(t)
  + \Gamma(t)^{\top} M \bigl(-D(t)\Gamma(t)\bigr) \\
&= -2\,\Gamma(t)^{\top} \operatorname{sym}(M D(t)) \Gamma(t).
\end{aligned}
\]
Since $\operatorname{sym}(M D(t)) \succeq 0$, we have 
\[
\dot P(t) \preceq 0
\qquad \forall\, t \in [t_a,t_b],
\]
in the Loewner order, which means that $P(t)$ is nonincreasing in $t$. Therefore, we have 
\[
P(t) \preceq P(t_a).
\]
Since $P(t_a) = \Gamma(t_a)^{\top} M \Gamma(t_a) = M$, we further obtain 
\[
P(t) = \Gamma(t)^{\top} M \Gamma(t)\preceq M.
\]
Since \(\Gamma(t)\) is invertible, we apply a congruence transformation with
\(\Gamma(t)^{-1}\) to the inequality \(\Gamma(t)^\top M\Gamma(t)\preceq M\) and get
\[
\Gamma(t)^{-T}\big(\Gamma(t)^\top M\Gamma(t)\big)\Gamma(t)^{-1}
\preceq \Gamma(t)^{-T}M\Gamma(t)^{-1}.
\]
The left-hand side equals \(M\), yielding \(M\preceq \Gamma(t)^{-T}M\Gamma(t)^{-1}\), and 
this completes the proof.
\end{proof}

\begin{theorem}\label{theorem-E-disppition}
Assume that for all \(t\), the linear coefficient matrix \(D(t)\) satisfies $\operatorname{sym}\!\big(MD(t)\big)\succeq 0$, 
then the  SEISAV method preserve the dissipation
of the modified Hamiltonian:
\[
\widetilde{H}(z^{n+1}, r^{n+1}) - \widetilde{H}(z^{n}, r^{n})  \leq  0,
\]
where the modified energy has the form: 
\[
\widetilde{H}(z^{n}, r^{n})=\frac{1}{2}{(z^{n})}^TMz^{n}+({r^{n}})^2-C.
\]
\end{theorem}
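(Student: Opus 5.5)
The plan is to split one SEISAV step into its three substeps \eqref{eq:sesav:half-in-left}, \eqref{eq:sesav:core-left} and \eqref{eq:sesav:half-out-left}. We then show that the modified energy $\widetilde H(z,r)=\tfrac12 z^\top M z+r^2-C$ does not increase across any of them, and chain the three inequalities:
\[
\widetilde H(z^{n+1},r^{n+1})\le \widetilde H(z^+,r^+)\le \widetilde H(z^-,r^-)\le \widetilde H(z^n,r^n).
\]

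For the two damping half-steps, the auxiliary variable is frozen ($r^-=r^n$ and $r^{n+1}=r^+$), so only the quadratic part changes. The propagator $\Psi^{(t_a,t_b)}=\Gamma(t_b)$ is the solution of $\dot\Gamma=-D(t)\Gamma$ with $\Gamma(t_a)=I$.

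One caution: the displayed formula $\exp(-\int D)$ equals this fundamental solution only when the $D(\tau)$ commute. I would read $\Psi$ as the exact flow of the linear subflow, as the paper describes it, and note the commuting case where needed.

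By the hypothesis $\operatorname{sym}(MD(t))\succeq0$, Lemma~\ref{lemma-SND} applies on $[t_n,t_n+\tfrac h2]$ and on $[t_n+\tfrac h2,t_{n+1}]$. It gives $\Gamma^\top M\Gamma\preceq M$, hence
\[
\tfrac12 (z^-)^\top M z^- = \tfrac12 (z^n)^\top \Gamma^\top M\Gamma z^n \le \tfrac12 (z^n)^\top M z^n,
\]
and the same argument handles the last half-step with $z^+$ in place of $z^n$. These two outer inequalities therefore follow immediately.

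The middle step is exactly one EISAV step \eqref{eq:EISAV}, started from $(z^-,r^-)$ with predictor $\hat z^{\,n+\frac12}$ and $\tilde g=g(\hat z^{\,n+\frac12})$. Indeed,
\[
\tfrac h2\varphi_1(A)S(r^++r^-)\tilde g = h\varphi_1(A)S\,\bar r\,\tilde g,
\]
with the same $r$-update. Proposition~\ref{energy_conservation-ESAV} then yields $\widetilde H(z^+,r^+)\le\widetilde H(z^-,r^-)$, with equality if $S$ is skew. Its proof never uses how $\hat z^{\,n+\frac12}$ is chosen, so replacing the predictor by one built from $z^-$, or from the full step, is harmless.

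The main obstacle, if one does not simply cite the appendix proof, is this middle step. The inequality must be verified for the exponential update directly, since EISAV is not a midpoint rule. I would try the following route. Set $A=hSM$. Multiply the $z$-update by $M$ and write
\[
z^+-z^- = (e^{A}-I)z^- + h\varphi_1(A)S\bar r\tilde g = h\varphi_1(A)S\bigl(Mz^-+\bar r\tilde g\bigr),
\]
using $e^A-I=\varphi_1(A)A$. Then express $\tfrac12 (z^+)^\top Mz^+-\tfrac12 (z^-)^\top Mz^-$ through the midpoint $\tfrac12 M(z^++z^-)$, and use $(r^+)^2-(r^-)^2=\bar r\,\tilde g^\top(z^+-z^-)$.

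The difficulty is that $\varphi_1(hSM)S$ is not obviously skew, or seminegative, in the $M$-weighted pairing. I expect to handle this by writing $\varphi_1(A)S=S\varphi_1(MS\,h)$ and using $M$-skew-adjointness of $SM$: when $S$ is skew, $e^{A}$ is $M$-orthogonal. This should show that the energy increment equals a quadratic form in $S$, which is zero or nonpositive. Since the paper states Proposition~\ref{energy_conservation-ESAV} as proven, I will rely on it and only remark on this point.
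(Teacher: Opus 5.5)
Your proposal is correct and is essentially the paper's proof: both split the step into the two exact damping half-flows, which Lemma~\ref{lemma-SND} controls, and the EISAV core step, which Proposition~\ref{energy_conservation-ESAV} handles. The differences are minor---your chain applies $\Gamma^\top M\Gamma\preceq M$ to $z^+$ where the paper applies $M\preceq\Psi_2^{-\top}M\Psi_2^{-1}$ to $z^{n+1}$, and writing ``$\le$'' in the middle step (the paper writes equality) also covers $S\preceq 0$; your caveat about $\exp(-\int D)$ is harmless, because $\bar D=\int_{t_a}^{t_b}D(\tau)\,d\tau$ also satisfies $\operatorname{sym}(M\bar D)\succeq0$, so the lemma applies to $e^{-\bar D}$ as well.
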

\begin{proof}
Based on the  EISAV scheme, we have 
\begin{equation*}
   \quad
    \begin{cases}
        z^{-} = \Psi(t_n,\, t_n+\tfrac{h}{2})\, z^{n}, \\
        r^{-} = r^{n},
    \end{cases}
    \quad
    \text{and}
    \quad
    \begin{cases}
        z^{n+1} = \Psi(t_n+\tfrac{h}{2},\, t_{n+1})\, z^{+}, \\
        r^{n+1} = r^{+}.
    \end{cases}
\end{equation*}
For brevity, we introduce
\[
\Psi_1 := \Psi(t_n,\, t_n+\tfrac{h}{2}), 
\quad 
\text{and}
\quad
\Psi_2 := \Psi(t_n+\tfrac{h}{2},\, t_{n+1}),
\]
then we have 
\[
z^{-} = \Psi_1 z^{n}, 
\quad 
\text{and}
\quad
z^{+} = \Psi_2^{-1} z^{n+1}.
\]
Since  EISAV method preserve the modified Hamiltonian while solving the Hamiltonian part, i.e., 
\[
\widetilde{H}(z^{-}, r^{-}) = \widetilde{H}(z^{+}, r^{+}),
\]
we obtain
\[
\widetilde{H}(\Psi_1 z^{n},\, r^{n})
=
\widetilde{H}(\Psi_2^{-1} z^{n+1},\, r^{\,n+1}).
\]
This identity gives
\[
\frac12 (z^{n})^{T} 
\Psi_1^{T} M \Psi_1
z^{n}
+ (r^{n})^{2} - C
=
\frac12 (z^{n+1})^{T} 
\Psi_2^{-T} M \Psi_2^{-1}
z^{n+1}
+ (r^{n+1})^{2} - C .
\]
Subtracting \(\widetilde{H}(z^{n}, r^{n})\) from $\widetilde{H}(z^{n+1}, r^{n+1})$ and using the above relation, we obtain
\begin{equation*}
\begin{split}
&\widetilde{H}(z^{n+1}, r^{n+1}) 
- 
\widetilde{H}(z^{n}, r^{n})\\
&=
\frac12 (z^{n+1})^{T}
\bigl[
M - \Psi_2^{-T} M \Psi_2^{-1}
\bigr]
z^{n+1}
+
\frac12 (z^{n})^{T}
\bigl[
\Psi_1^{T} M \Psi_1 - M
\bigr]
z^{n}\\
& \le 0.
\end{split}
\end{equation*}
 The last inequality follows from Lemma \ref{lemma-SND}.
\end{proof}
\begin{theorem}
Assume that for all \(t\), the linear coefficient matrix \(D(t)\) satisfies $\operatorname{sym}\!\big(MD(t)\big)\succeq 0$, 
then the  SEILM method preserve the dissipation
of the Hamiltonian:
\[
H(z^{n+1}) -H(z^{n}) \leq  0,
\]
where the discrete original energy has the form 
\[
H(z^{n})=\frac{1}{2}{(z^{n})}^TMz^{n}+{V(z^{n})}.
\]
\end{theorem}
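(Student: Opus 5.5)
The argument will follow the proof of Theorem~\ref{theorem-E-disppition}, with the modified energy $\widetilde H$ replaced by the original energy $H(z)=\tfrac12 z^\top Mz+V(z)$. The one new feature is that the potential is carried as the scalar $V$, which the scheme freezes during both damping half-steps. Write $\Psi_1:=\Psi^{(t_n,t_n+\frac h2)}$ and $\Psi_2:=\Psi^{(t_n+\frac h2,t_{n+1})}$. Each is the solution $\Gamma$ of Lemma~\ref{lemma-SND} started at the appropriate initial time, so it is invertible and satisfies
\[
\Psi_1^\top M\Psi_1\preceq M,\qquad M\preceq \Psi_2^{-\top}M\Psi_2^{-1}.
\]
From \eqref{eq:slmsav:half-in} and \eqref{eq:slmsav:half-out} we read off $z^-=\Psi_1 z^n$, $V^-=V(z^n)$, $z^+=\Psi_2^{-1}z^{n+1}$ and $V^+=V(z^{n+1})$. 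The last identity is precisely the constraint that the scheme enforces on the output.

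Next I treat the Hamiltonian core \eqref{eq:selmsav:core}. It is the EILM step \eqref{eq:EI_LM_z} started from $z^-$ with stored potential $V^-$, so Proposition~\ref{energy_conservation-LM-ESAV} gives
\[
\tfrac12 (z^+)^\top Mz^+ + V^+ \le \tfrac12 (z^-)^\top Mz^- + V^-,
\]
with equality when $S^\top=-S$.

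The main obstacle is that Proposition~\ref{energy_conservation-LM-ESAV} is stated for $H(z^n)$, that is, with $V^n=V(z^n)$ at the input. Here the input potential $V^-=V(z^n)$ is generally not $V(z^-)$. I will therefore re-examine the appendix proof of Proposition~\ref{energy_conservation-LM-ESAV}. That proof uses only the scalar relation $V^+-V^-=\eta\,\tilde g^\top(z^+-z^-)$ together with the quadratic/exponential identity for $\tfrac12 z^\top Mz$. It does not use the identification of $V^-$ with $V(z^-)$. So the inequality holds for the extended energy $\tfrac12 z^\top Mz+V$ in the variables $(z,V)$, which is the form needed above. In this reading the scalar $V$ plays the same role as $r^2-C$ in the SEISAV proof.

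Finally, I substitute the damping relations into the core inequality and subtract $H(z^n)=\tfrac12 (z^n)^\top Mz^n+V(z^n)$ from $H(z^{n+1})=\tfrac12 (z^{n+1})^\top Mz^{n+1}+V(z^{n+1})$. The potential terms cancel, since $V^+=V(z^{n+1})$ and $V^-=V(z^n)$. This leaves
\[
H(z^{n+1})-H(z^n)\le \tfrac12 (z^{n+1})^\top\bigl[M-\Psi_2^{-\top}M\Psi_2^{-1}\bigr]z^{n+1}+\tfrac12 (z^n)^\top\bigl[\Psi_1^\top M\Psi_1-M\bigr]z^n,
\]
and both brackets are negative semidefinite by Lemma~\ref{lemma-SND}, so $H(z^{n+1})-H(z^n)\le 0$.

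The argument therefore needs two things. The stored scalar $V$ must be consistent with the output, which is the identity $V(z^{n+1})=V^+$ from \eqref{eq:slmsav:half-out}. And the core step needs only the scalar-$V$ version of the EILM energy law, as discussed above.
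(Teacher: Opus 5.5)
Your proposal is correct and follows the paper's argument. The paper only says the proof is analogous to Theorem~\ref{theorem-E-disppition}: sandwich the energy inequality of the Hamiltonian core between the two exact damping half-steps and apply Lemma~\ref{lemma-SND}, with the stored scalar $V$ playing the role of $r^2-C$. Your two extra observations are the points the paper's one-line ``analogous'' proof leaves implicit. First, the EILM energy law only uses the increment $V^+-V^-$, so it still holds with the stored value $V^-=V(z^n)\neq V(z^-)$. Second, the conclusion for the original energy depends on reading \eqref{eq:slmsav:half-out} as the genuine constraint $V(z^{n+1})=V^+$.
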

The proof of this theorem follows analogously to the proof of Theorem \ref{theorem-E-disppition}.

\section{Numerical experiments} \label{sec:numerical example}
We evaluate the proposed two splitting schemes on benchmark problems and compare them with six state-of-the-art energy-stable integrators:
\begin{itemize}
\item \textbf{AVF}: the averaged vector field method in \cite{celledoni2012preserving}. 
\item \textbf{SAV}: the scalar auxiliary approach in \cite{shen2019new}, as stated in section \ref{standard sav}.
    \item \textbf{SAVF}: the splitting averaged vector field method in \cite{liu2021dissipation}.
    \item \textbf{SEAVF}: the splitting exponential averaged vector field method in \cite{liu2021dissipation}.
\item \textbf{SSAV}: the splitting scalar auxiliary variable approach, obtained by replacing the EISAV update for the Hamiltonian substep in \eqref{eq:sesav:core-left} with the standard  SAV discretization.
\item \textbf{SLM}: the splitting Lagrangian multiplier approach, obtained by replacing the EILM update for the Hamiltonian substep in \eqref{eq:selmsav:core} with the Lagrangian multiplier discretization.
\end{itemize}
AVF and SAV are applied only to the first two numerical examples, as these problems can be rewritten in a gradient-type form required by both methods. The numerical error is measured by
\[
\mathcal{E} = \max_n \| z^n - z(t_n) \|_\infty.
\]
The energy error is defined by
\[
\mathcal{E}_H(t_n) = \bigl| H(z^n) - H(z(t_n)) \bigr|.
\]
The reference solutions are obtained using the SEAVF scheme that has been shown to be effective for  linearly perturbed damped systems, with a very small step size $h_{\mathrm{ref}} = 2^{-10} /100$. All nonlinear equations arising from fully implicit schemes are solved by fixed-point iteration with an absolute tolerance of \(10^{-15}\). In the LM-related approaches, the scalar nonlinear equation for \(\eta\) is solved by Newton's method with the same tolerance.  We consider tolerances ranging from $10^{-15}$ to $10^{-4}$ and observe that SEISAV and SEILM are consistently the most efficient two methods across all test problems. The matrix exponentials are approximated by Padé formulas \cite{berland2007expint} except that $e^A$ is computed using MATLAB's built-in function.  All numerical experiments were conducted using MATLAB (R2021a) on a HUAWEI laptop (model BOHL-WXX9) equipped with an AMD Ryzen 7 4700U processor and 16 GB of RAM.

\paragraph{\textbf{Example 1: Damped nonlinear Klein-Gordon equation}}
We consider the damped nonlinear Klein-Gordon equation \cite{cote2021long}  
\begin{equation}\label{eq:KG}
u_{tt}+\gamma u_t-\kappa u_{xx}+\omega^2 u+\alpha u^3=0,\quad x\in[0,L],\;t>0,
\end{equation}
with periodic boundary conditions \(u(0,t)=u(L,t)\).  
Introducing the momentum variable \(v=u_t\) and defining \(z(x,t)=\bigl(u(x,t),v(x,t)\bigr)^{\!\top}\), equation \eqref{eq:KG} can be rewritten as  a damped Hamiltonian system 
\[
\partial_t z=\mathcal{S}\,\frac{\delta\mathcal{H}(z)}{\delta z}-\mathcal{D} z,
\]
where the Hamiltonian functional is
\[
\mathcal{H}(z)=\int_{0}^{L}\!\Bigl(\tfrac12 v^{2}+\tfrac12\kappa u_x^{2}+\tfrac12\omega^2 u^{2}+\tfrac\alpha4 u^{4}\Bigr)\,dx,
\]
and the operators are given by
\[
\mathcal{S}=\begin{pmatrix}0&1\\-1&0\end{pmatrix},\quad
\mathcal{D}=\begin{pmatrix}0&0\\0&\gamma\end{pmatrix}.
\]
For smooth solutions, \(\mathcal H\) satisfies the energy dissipation identity
\[
\frac{d}{dt}\mathcal{H}(z)=-\gamma\int_{0}^{L}v^{2}\,dx\le 0.
\]

Let \(N\in\mathbb{N}\) and introduce a uniform periodic grid on \([0,L]\) with spatial step size $\Delta x=L/N$. We approximate \(u_{xx}\) by the standard second-order central finite difference. Denote the semi-discrete unknowns by \(u_j(t)\approx u(x_j,t)\) and \(v_j(t)\approx u_t(x_j,t)\). To simplify notation, we continue to use \(z\) for the semi-discrete state, i.e.,  \(z=(u_1,\ldots,u_N,v_1,\ldots,v_N)^\top\in\mathbb{R}^{2N}\).
Then the resulting semi-discrete system can be written in the damped Hamiltonian form
\begin{equation*}\label{eq:KG_dHam}
\dot z = S\nabla H(z)-Dz,
\end{equation*}
with
\[
S=\begin{pmatrix}0&I\\-I&0\end{pmatrix},\qquad
D=\begin{pmatrix}0&0\\0&\gamma I\end{pmatrix},
\]
and the Hamiltonian
\[
H(z)=\frac12 p^\top p+\frac12 q^\top(\omega^2 I-\kappa R)q+\frac{\alpha}{4}\sum_{j=1}^N q_j^4,
\]
where \(R\in\mathbb{R}^{N\times N}\) denotes the periodic discrete Laplacian induced by the central-difference stencil. We set $L=2$, $N=20$, $\omega=\sqrt{0.2}$, $\kappa=0.04$, $\alpha=-1$, $\gamma=0.1$, $T=50$, and consider the initial conditions
\[
u_j(0)=\frac{\tanh(\sqrt{2}\,x_j)}{\sqrt{5}},\quad
v_j(0)=-\frac{3\sqrt{2}}{10\sqrt{5}}\operatorname{sech}^2(\sqrt{2}\,x_j),\quad
x_j=j\Delta x,\ 1\le j\le N.
\]
\begin{figure}
  \centering
  \subfloat[Order plot\label{fig:orderA}]{
    \includegraphics[width=0.46\textwidth]{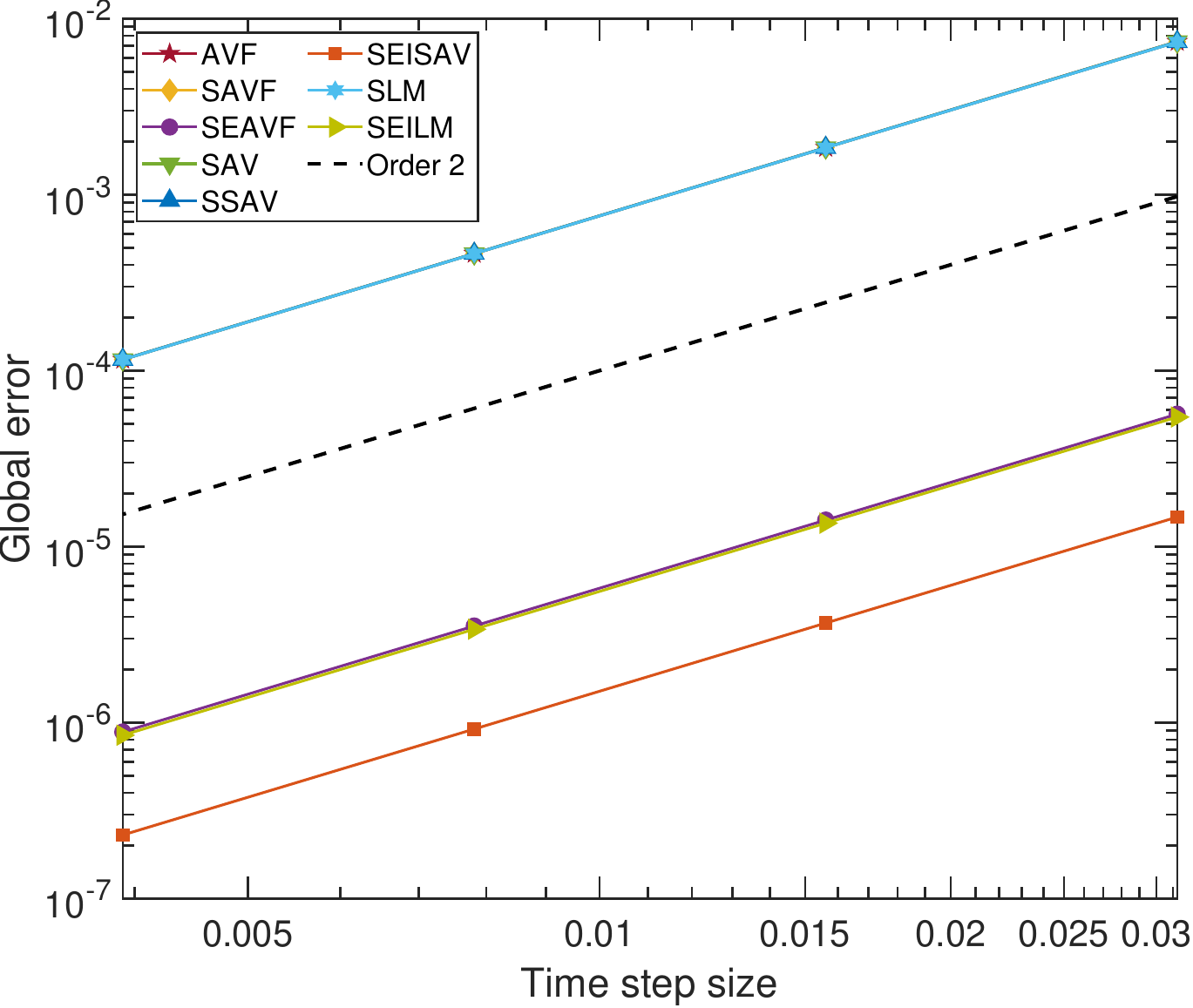}
  }\hfill
  \subfloat[Efficiency plot\label{fig:efficiencyA}]{
    \includegraphics[width=0.46\textwidth]{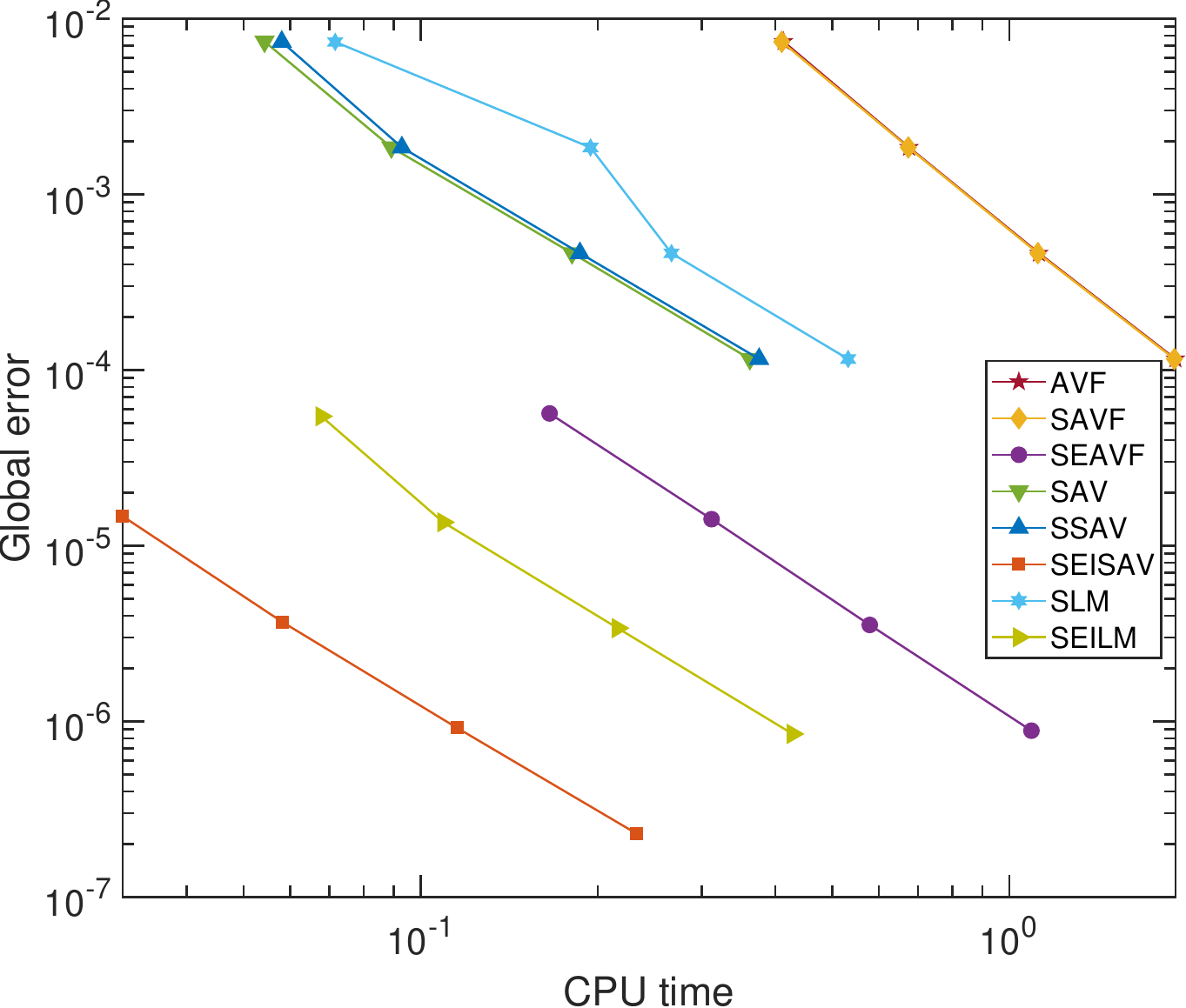}
  }

  \caption{Comparison of convergence orders (left) and computational efficiency (right) for the damped nonlinear Klein-Gordon equation.}
  \label{fig:figure1KG}
\end{figure}
 \begin{figure}
  \centering
 \subfloat[Energy evolution plot\label{fig:Energy decay A}]{
    \includegraphics[width=0.46\textwidth]{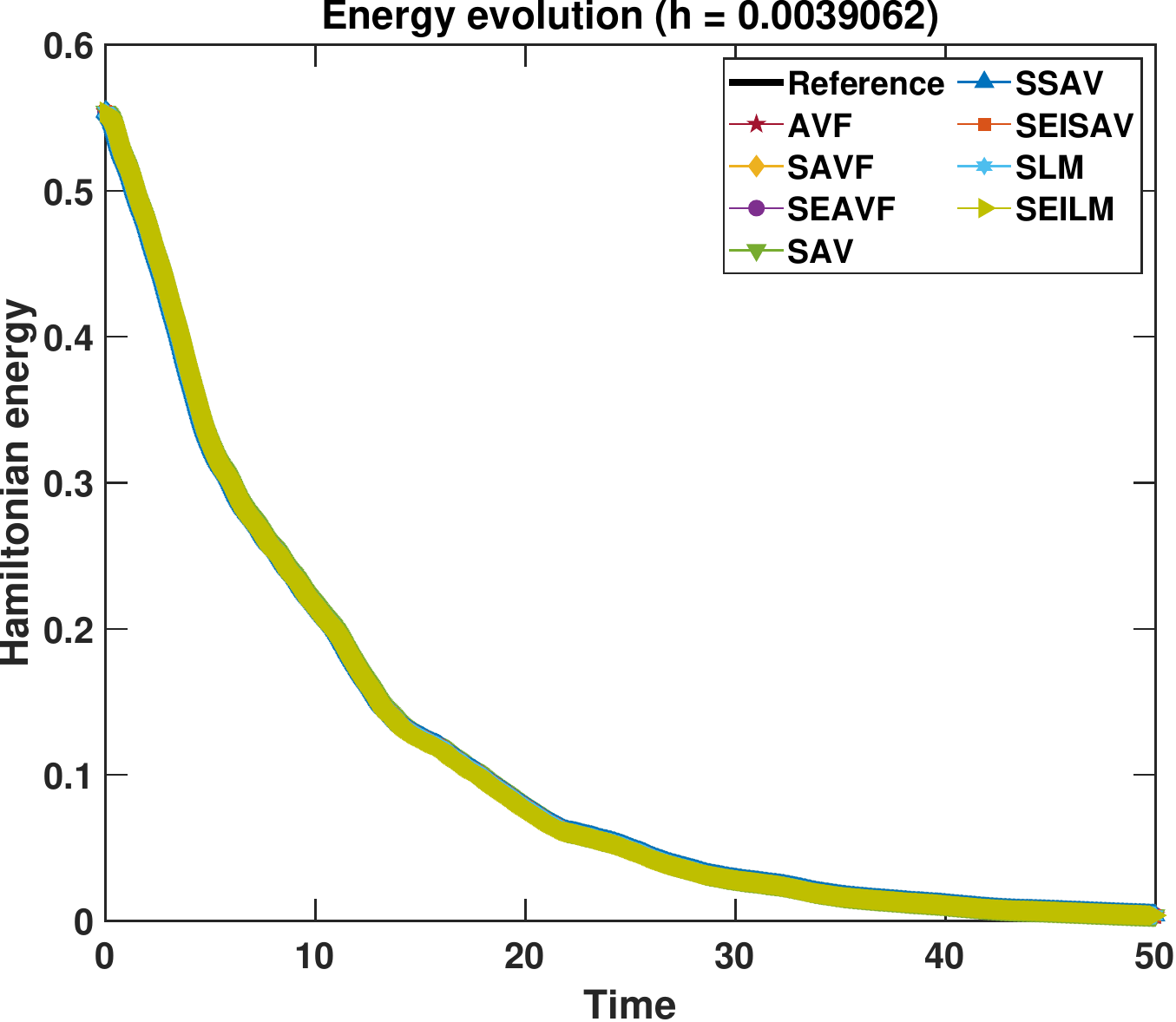}
  }\hfill
  \subfloat[Energy error plot\label{fig:Numerical energy error A}]{
    \includegraphics[width=0.46\textwidth]{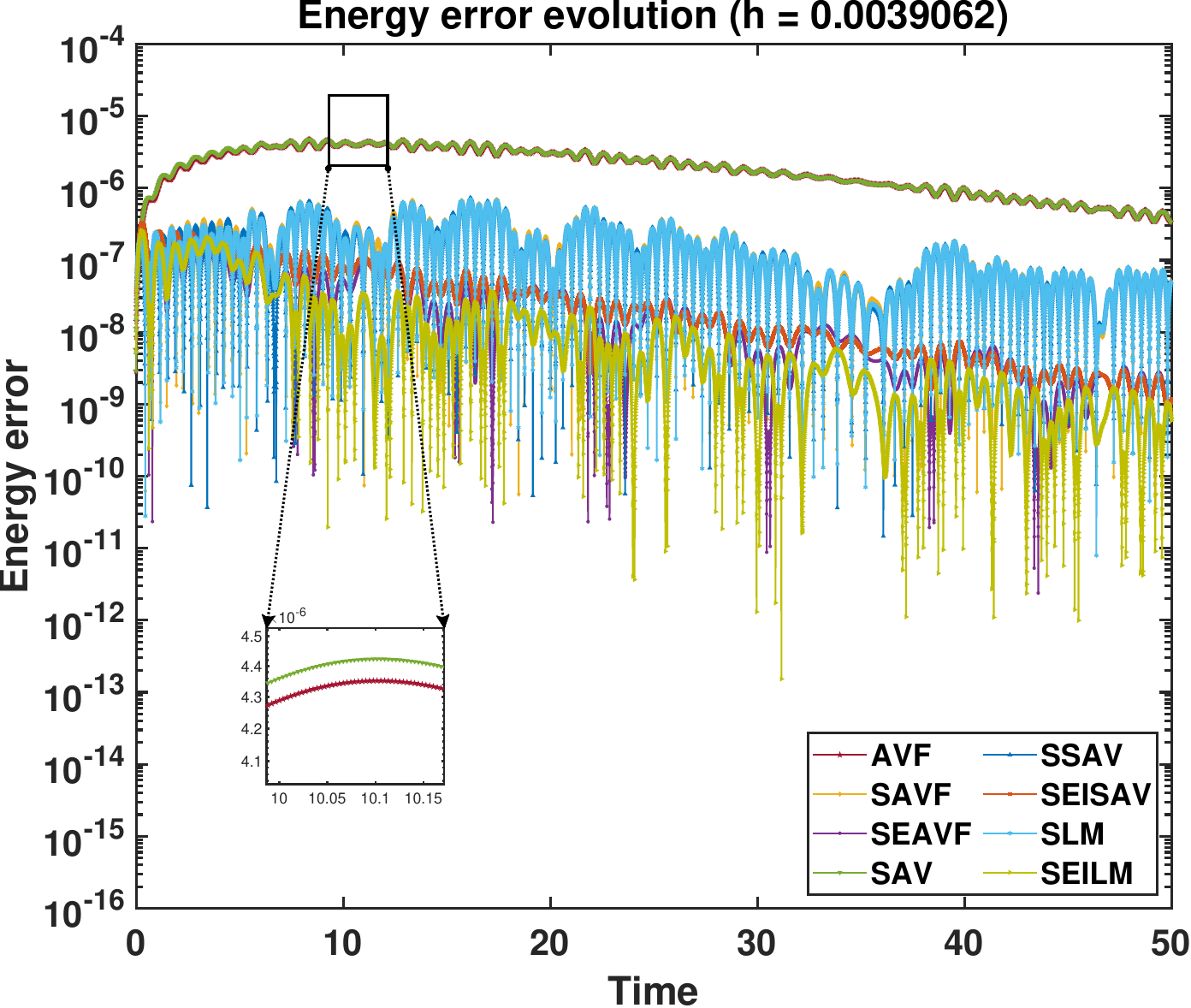}
  }
 \caption{Discrete energy evolution (left) and energy error (right) for the damped nonlinear Klein-Gordon equation.}
  \label{fig:figure2KG}
\end{figure}
Figure~\ref{fig:figure1KG} compares  the eight structure-preserving schemes with different time step sizes $h=1/2^{j+4}$ ($j=1,2,3,4$).  Figure~\ref{fig:orderA}  confirms that all methods exhibit a second-order convergence rate and that exponential integrators produce more accurate results than the other integrators. Figure \ref{fig:efficiencyA} reports the CPU time against accuracy, where we observe less computational cost for the SAV/LM-based methods than the other schemes; in particular, the proposed  SEISAV  and SEILM are substantially more efficient.  Figure~\ref{fig:Energy decay A} shows that, for all methods, the discrete energy \(H(z^{n})\) decays monotonically, consistent with the energy dissipation of the exact solution. Figure~\ref{fig:Numerical energy error A} shows that AVF exhibits a noticeable drift from the reference energy, while SAVF follows the reference energy much more closely. This difference highlights the benefit of the splitting strategy in capturing the energy structure of the damped Hamiltonian system. Moreover, from Figure \ref{fig:Numerical energy error A}, we observe that SEILM provides a closer match to the reference  energy than SEISAV, since the Lagrange-multiplier correction enforces energy consistency; however, as we can see from the CPU-time comparison in Figure~\ref{fig:efficiencyA} that this improved energy fidelity comes with additional computational cost.

\paragraph{\textbf{Example 2: Continuous $\alpha$-FPU model with  damping terms}}
We study the dynamics of the one-dimensional damped $\alpha$-Fermi-Pasta-Ulam ($\alpha$-FPU) equation \cite{macias2009implicit}
\begin{equation}\label{eq:afpu-pde}
u_{tt} - \beta u_{xxt} - \partial_x^2(1+ \varepsilon u_x^{k}) + m^{2}u + \gamma u_t = 0,
\end{equation}
with homogeneous Dirichlet boundary conditions \(u(0,t)=u(L,t)=0\).
Introducing the velocity variable \(v = u_t\), and defining  \(z(x,t)=\bigl(u(x,t),v(x,t)\bigr)^{\!\top}\), system \eqref{eq:afpu-pde} can be rewritten as a  damped Hamiltonian system
\begin{equation*}\label{eq:afpu-ham}
\partial_t z = \mathcal{S}\,\frac{\delta\mathcal{H}(z)}{\delta z} - \mathcal{D} z.
\end{equation*}
In the following, we present the numerical results for the $\alpha$-FPU system under two alternative choices of the operators \(\mathcal{S}\) and \(\mathcal{D}\).
\paragraph{\textbf{Example 2.1 -- $\mathcal{S}$ is a skew-adjoint operator}}
System \eqref{eq:afpu-pde} can be rewritten as a conservative part perturbed by a linear damping term with the Hamiltonian functional defined by 
\begin{equation*}\label{eq:afpu-energy}
\mathcal{H}(z)
= \int_{0}^{L} 
\left(
\tfrac{1}{2}v^{2}
+ \tfrac{1}{2}m^{2}u^{2}
+ \tfrac{1}{2}u_{x}^{2}
+ \dfrac{\epsilon{u_x}^{k+2}}{(k+1)(k+2)}
\right) dx,
\end{equation*}
and the operators given by
\begin{equation}\label{eq:afpu-SD}
\mathcal{S}=\begin{pmatrix}0&1\\-1&0\end{pmatrix},\quad
\mathcal{D}=\begin{pmatrix}0&0\\0&\gamma - \beta \partial_x^{2}\end{pmatrix}.
\end{equation}
For smooth solutions, \(\mathcal H\) satisfies the energy dissipation identity
\begin{equation*}\label{eq:afpu-diss}
\frac{d}{dt}\mathcal{H}(z))
= -\gamma \int_{0}^{L} v^{2}\,dx
  -\beta \int_{0}^{L} v_{x}^{2}\,dx
\le 0.
\end{equation*}
Introducing uniform grid points and continuing to use \(z\) for the semi-discrete state, we obtain the semi-discrete system  of the form \eqref{eq:model}
where
\[
S=\begin{pmatrix}0&I\\-I&0\end{pmatrix},\qquad
D=\begin{pmatrix}0&0\\0&\gamma I-\beta R\end{pmatrix},
\]
and the  Hamiltonian is
\begin{equation}\label{eq:afpu-diss-H}
H(z)=\frac12\,v^\top v+\frac12\,u^\top M u+V(u),
\end{equation}
with $M=m^2I-R$ and 
\[
V(u)=\frac{\varepsilon}{\Delta x^{k+2}(k+1)(k+2)}
\sum_{j=0}^{N-1}\big(u_{j+1}-u_j\big)^{k+2}.
\]
Here \(R\in\mathbb{R}^{(N-1)\times(N-1)}\) is the standard second-order central-difference discretization of the one-dimensional Laplacian subject to homogeneous Dirichlet boundary conditions.
We set $k=1$, $L=2$, $N=20$, $\varepsilon=0.005$,
$\beta=0.01$, $\gamma=0.5$, $m=1$, $T=50$, and consider the initial conditions with  \(\alpha=0.1\)
\begin{equation*}\label{eq:uj0}
u_j^0
=5\ln\frac
{\bigl(1+\mathrm{e}^{2\alpha(j-97)+t\sinh\alpha}\bigr)
 \bigl(1+\mathrm{e}^{2\alpha(j-32)+t\sinh\alpha}\bigr)}
{\bigl(1+\mathrm{e}^{2\alpha(j-96)+t\sinh\alpha}\bigr)
 \bigl(1+\mathrm{e}^{2\alpha(j-33)+t\sinh\alpha}\bigr)},\quad
v_j^0=0.
\end{equation*}
\begin{figure}
  \centering
  \subfloat[Order plot\label{fig:orderB}]{
    \includegraphics[width=0.46\textwidth]{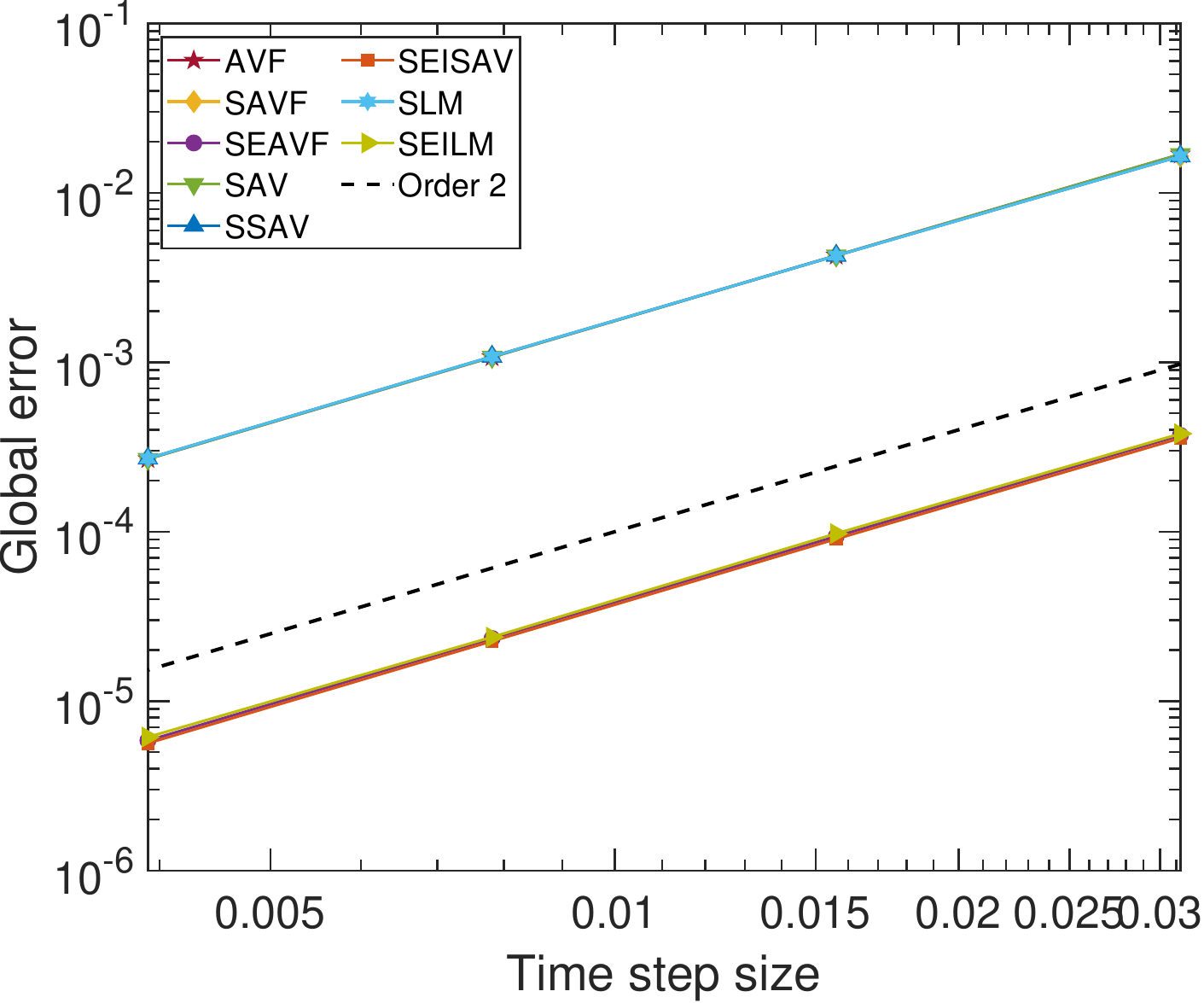}
  }\hfill
  \subfloat[Efficiency plot\label{fig:efficiencyB}]{
    \includegraphics[width=0.46\textwidth]{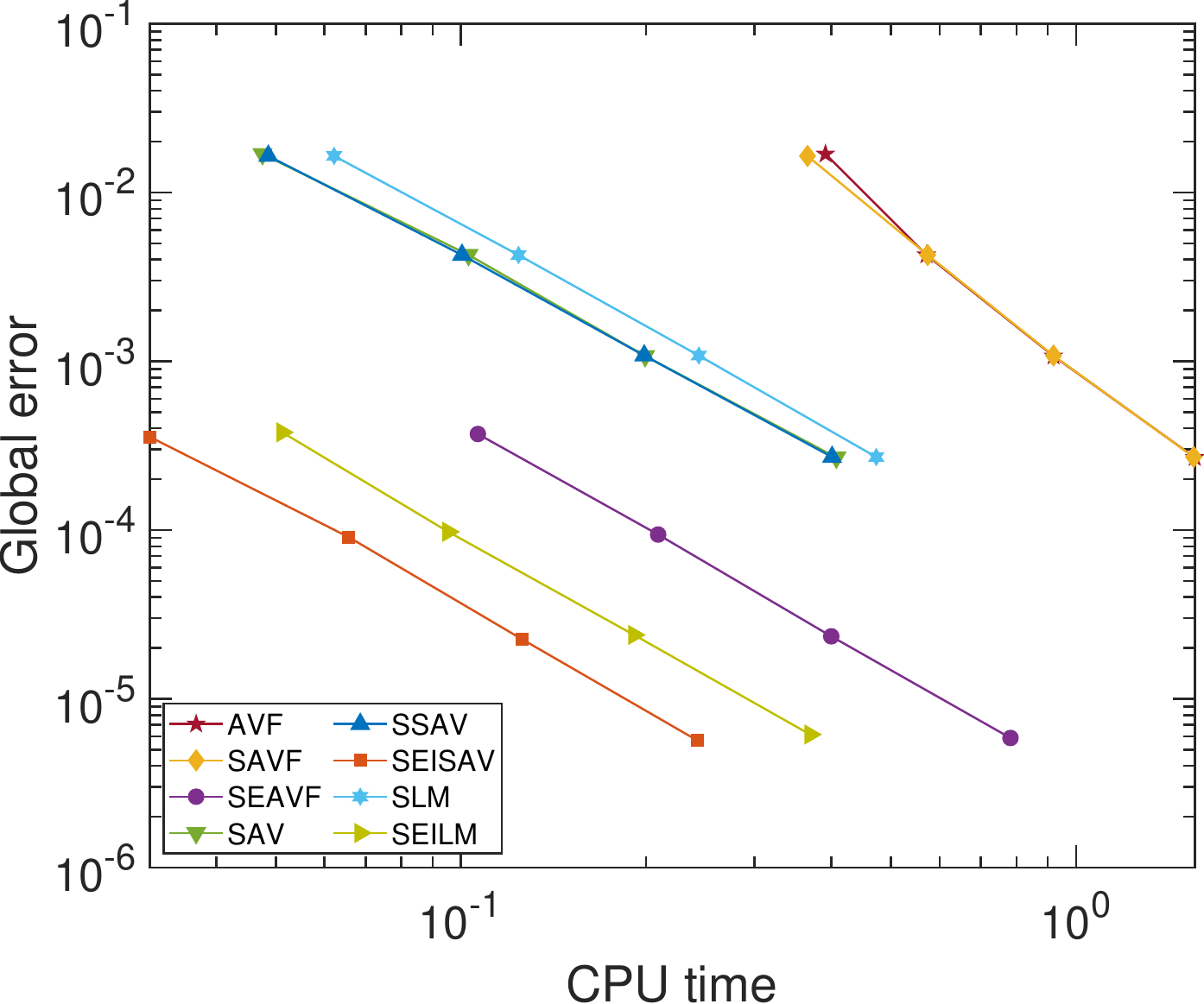}
  }
  \caption{Comparison of convergence orders and computational efficiency for the damped $\alpha$-FPU equation in Example~2.1.}
  \label{fig:figure3FPU21}
\end{figure}

\begin{figure}
  \centering
  \subfloat[Energy evolution plot\label{fig:Energy decay B}]{
    \includegraphics[width=0.46\textwidth]{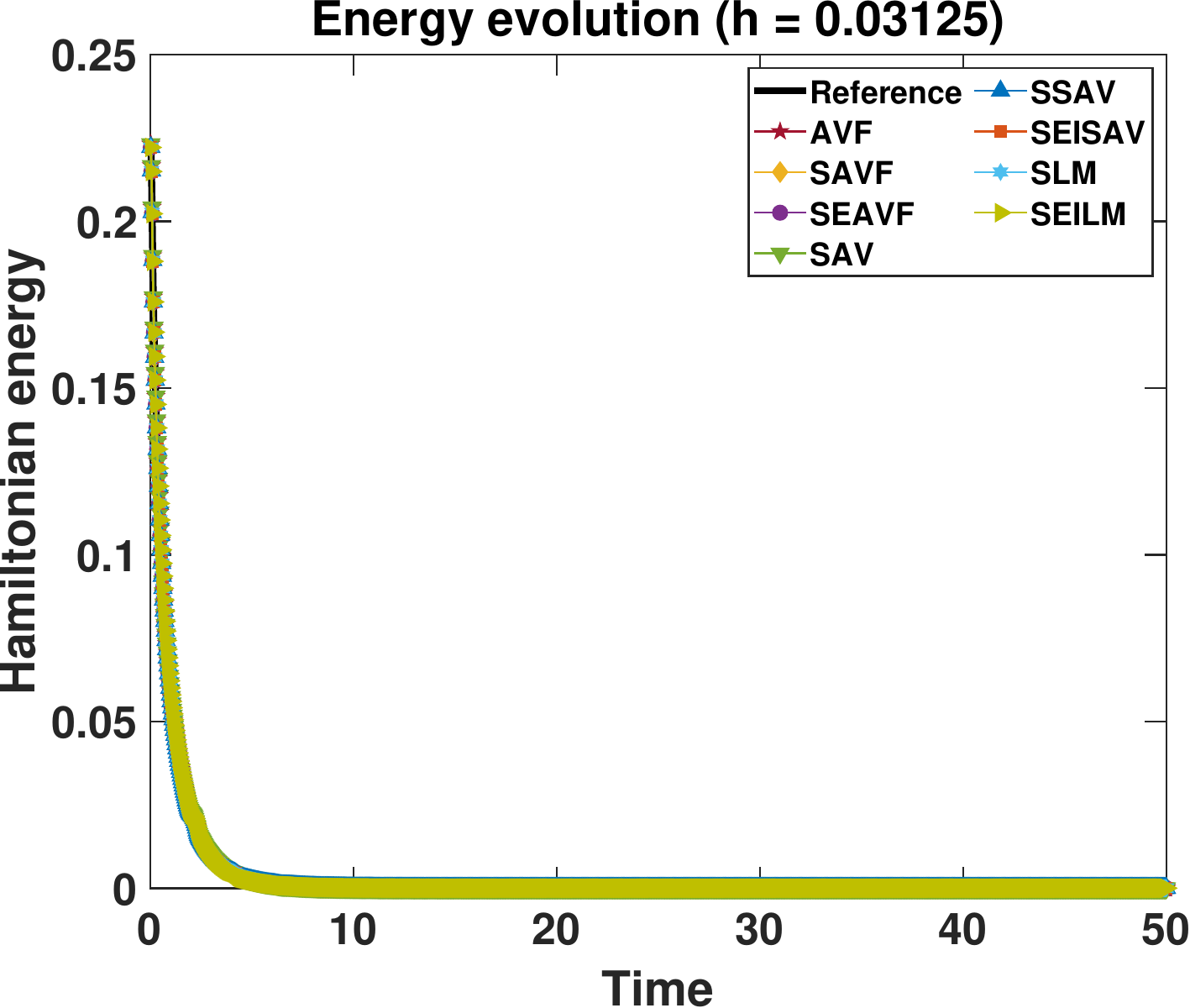}
  }\hfill
  \subfloat[Energy error plot\label{fig:Numerical solution B}]{
    \includegraphics[width=0.46\textwidth]{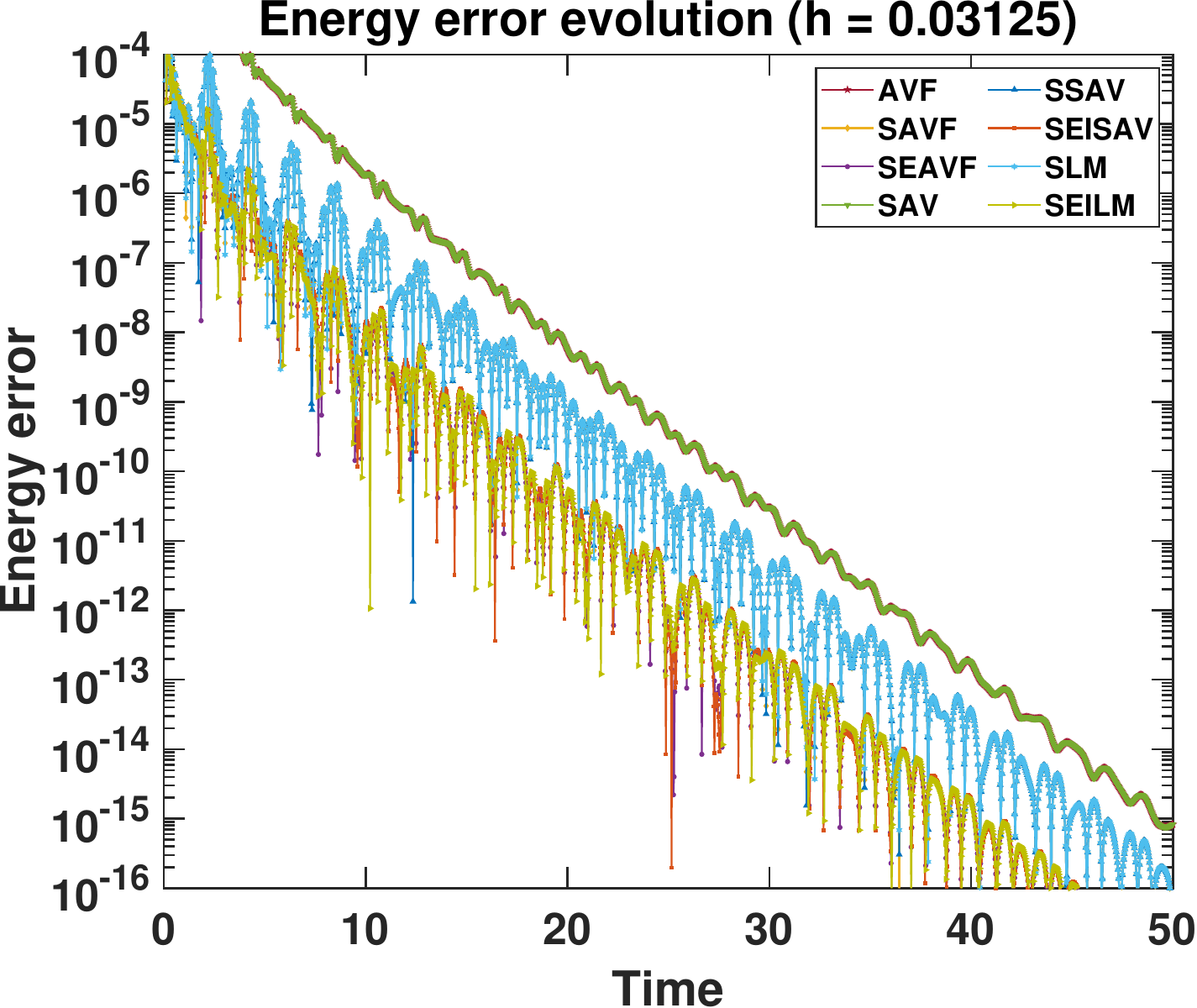}
  }
  \caption{Left: discrete energy evolution for all methods applied to the $\alpha$-FPU formulation in Example~2.1; right: CPU time versus accuracy for all methods applied to the $\alpha$-FPU formulation in Example~2.1.}
  \label{fig:figure4FPU21}
\end{figure}

Figure~\ref{fig:orderB} demonstrates that all methods achieve a second-order convergence rate, and the exponential integrators are more accurate than the other schemes for each temporal step size. Figure~\ref{fig:efficiencyB} presents the CPU time versus accuracy and shows that the SAV/LM-based methods exhibit better computational efficiency, and the best two are the proposed  SEISAV and SEILM methods. Figure~\ref{fig:Energy decay B} depicts the evolution of the discrete energy \(H(z^n)\), confirming that the numerical solutions reproduce the expected monotone decay behavior. Figure~\ref{fig:Numerical solution B} reports the energy error with respect to the reference energy. We observe that SEILM, SEISAV and SEAVF yield comparable energy errors, indicating that, for this test, the modified energy controlled by SEISAV closely tracks the original physical energy. The Lagrange-multiplier correction in SEILM offers limited improvement in energy accuracy, while incurring a higher computational cost than SEISAV as shown in Figure~\ref{fig:efficiencyB}.

\paragraph{\textbf{Example 2.2 -- $\mathcal{S}$  is a negative semidefinite operator}}
System \eqref{eq:afpu-pde} can be rewritten as a dissipative part perturbed by a linear damping term by separating the scalar damping coefficient \(\gamma\) from \(\mathcal{D}\) and absorbing it into the operator \(\mathcal{S}\) in \eqref{eq:afpu-SD}. This leads to a modified operator \(\mathcal{S}\) that is negative semidefinite in \(L^2\). After spatial semi-discretization,  we get an equation of the form \eqref{eq:model}, where the energy has the same form of \eqref{eq:afpu-diss-H} and 
\[
S=\begin{pmatrix}0&I\\-I&-\gamma I\end{pmatrix},\qquad
D=\begin{pmatrix}0&0\\0&-\beta R\end{pmatrix}.
\]
\begin{figure}
  \centering
  \subfloat[Energy evolution plot\label{fig:orderB2}]{
    \includegraphics[width=0.46\textwidth]{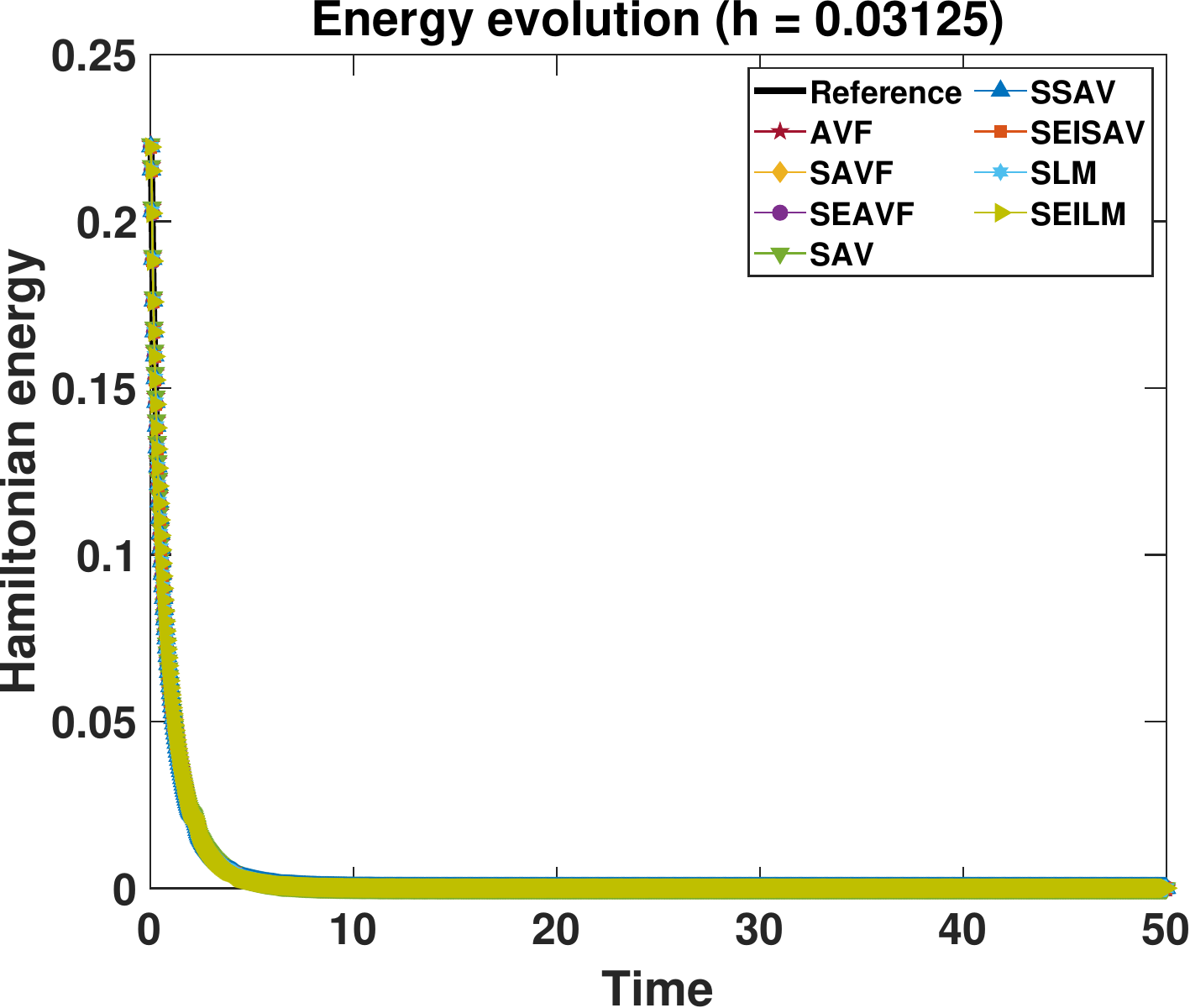}
  }\hfill
  \subfloat[Efficiency Comparison\label{fig:efficiencyB2}]{
    \includegraphics[width=0.46\textwidth]{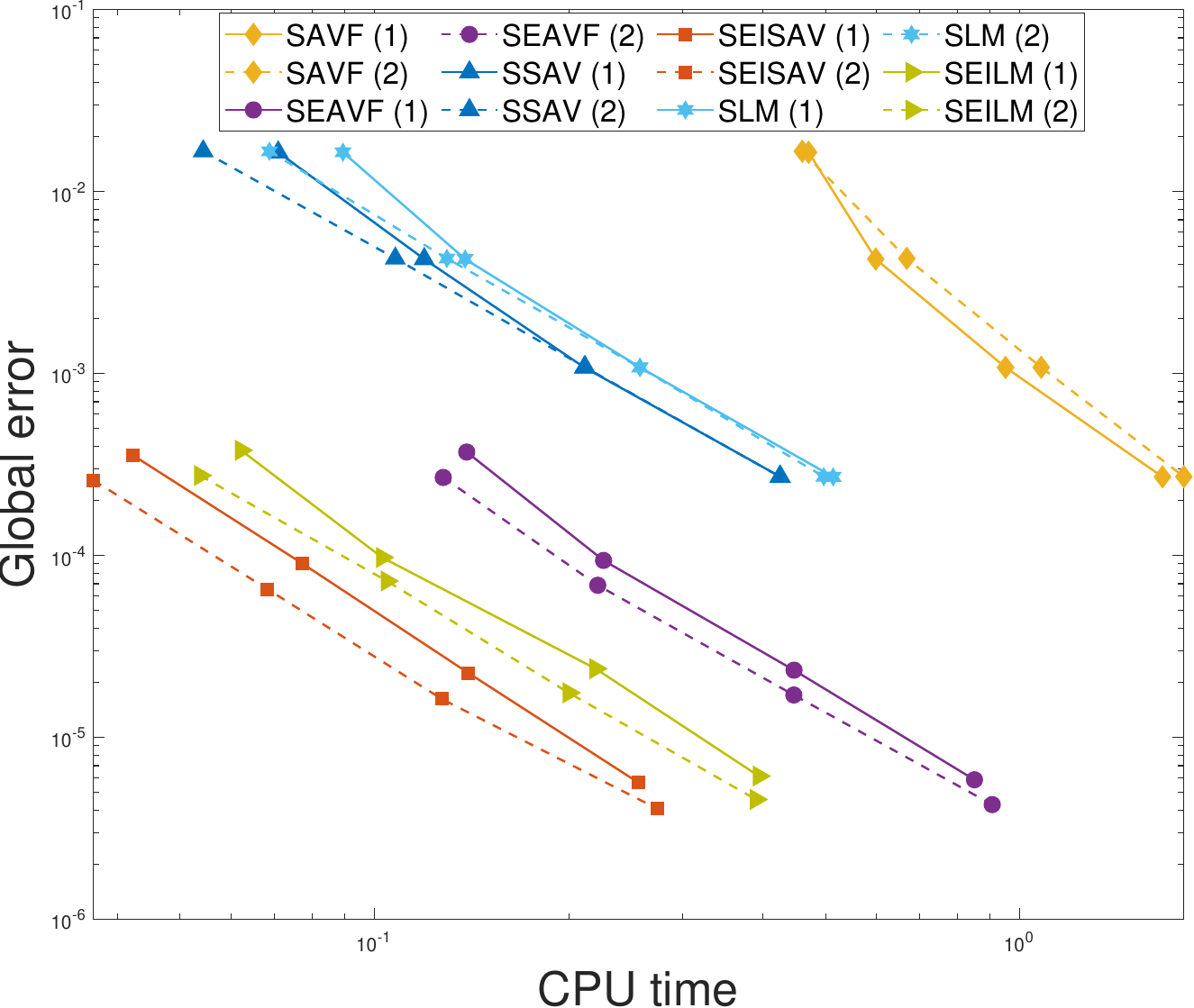}
  }

  \caption{Left: discrete energy evolution for all methods applied to the $\alpha$-FPU formulation in Example~2.2; right: CPU time versus accuracy for all splitting schemes applied to the $\alpha$-FPU formulations in Examples~2.1 and~2.2.}
  \label{fig:figure5FPU2122}
\end{figure}

We consider the same parameters and initial conditions as in Example 2.1. Figure~\ref{fig:orderB2} shows that all schemes reproduce the monotone energy decay of the exact solution. All methods also exhibit second-order convergence in time for this test; to avoid redundancy, we omit the corresponding error plot. Figure~\ref{fig:efficiencyB2} compares CPU time versus accuracy for the four splitting schemes applied to the $\alpha$-FPU system under the two alternative operator formulations introduced in Examples~2.1 and~2.2, thereby summarizing both method-to-method performance and the impact of the chosen formulation. In both settings, SEISAV is the most efficient among the tested schemes, and the two proposed methods consistently achieve a better accuracy-cost trade-off than SEAVF. Moreover, for the exponential integrators, the second formulation (a dissipative subsystem perturbed by a linear damping term) yields smaller errors at comparable cost than the first formulation (a conservative subsystem perturbed by a linear damping term).

\paragraph{\textbf{Example 3: Generalized KdV with linear damping}}
We consider the generalized Korteweg-de Vries (gKdV) equation \cite{ali2017nonlinear} with linear damping on periodic interval $[0,L]$
\begin{equation}\label{eq:gkdv-pde}
  u_t + \varepsilon\, u_{xxx} + \partial_x\bigl(\sigma\, u^{\,k+1}\bigr) + \mu\, u = 0,
\end{equation}
where $\varepsilon $ is the dispersion coefficient, $\sigma $ measures the strength and type of nonlinearity, $k \in \mathbb{N}$ is the nonlinearity exponent, and $\mu $ is the damping coefficient.
Equation \eqref{eq:gkdv-pde} can be rewritten as a dissipative Hamiltonian PDE
\[
\partial_t u = \mathcal{S} \frac{\delta \mathcal{H}(u)}{\delta u} - \mathcal{D} u, 
\]
with $\mathcal{S} = \partial_x$, $\mathcal{D} = \mu$, and   the Hamiltonian functional defined by 
\[
\mathcal{H}(u) = \int_0^L \Bigl( \frac \varepsilon2 u_x^2 - \frac\sigma {k+2} u^{k+2} \Bigr)\
\]
Let \(N\in\mathbb{N}\) and introduce the uniform periodic grid  with \(\Delta x=L/N\). 
Denote \(u_j(t)\approx u(x_j,t)\) and set \(z=(u_1,\ldots,u_N)^\top\in\mathbb{R}^N\).
Under periodic boundary conditions, we discretize \(\partial_x\) and \(\partial_{xx}\) by the standard first and second-order central-difference operators, denoted by \(S\) and \(R\), respectively. This yields the following semi-discrete system:
\begin{equation*}\label{eq:gkdv_semiham}
\dot z = S \nabla H(z) - D z,
\end{equation*}
with $D=\mu I$ and 
\[
H(z)=-\frac{\varepsilon}{2}\,z^\top R\,z-\frac{\sigma}{k+2}\sum_{j=1}^N z_j^{k+2}.
\]
We set the model parameters as \(\sigma = 0.001\), \(\varepsilon = 1\), and \(k = 2\), and consider $L = 10$, $N = 20$, $\mu = 0.5$, and $T = 50$.
The initial condition is set to be
\[
u_0(x_j) = 0.4\,\exp\!\left(-\frac{(x_j - L/2)^2}{2}\right),\qquad j = 1,\dots,N.
\]
\begin{figure}
  \centering
  \subfloat[Order plot\label{fig:orderC}]{
    \includegraphics[width=0.46\textwidth]{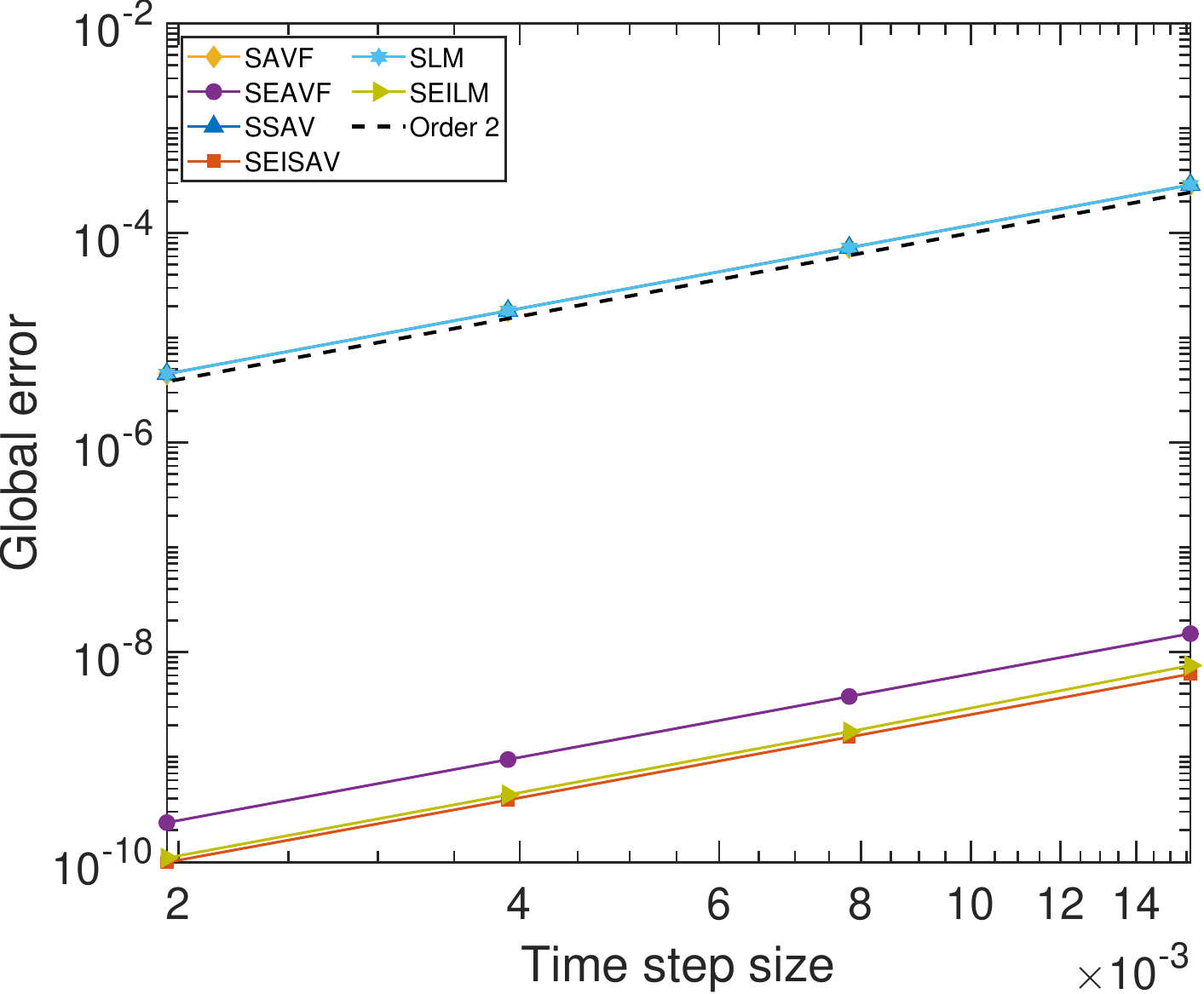}
  }\hfill
  \subfloat[Efficiency plot\label{fig:efficiencyC}]{
    \includegraphics[width=0.46\textwidth]{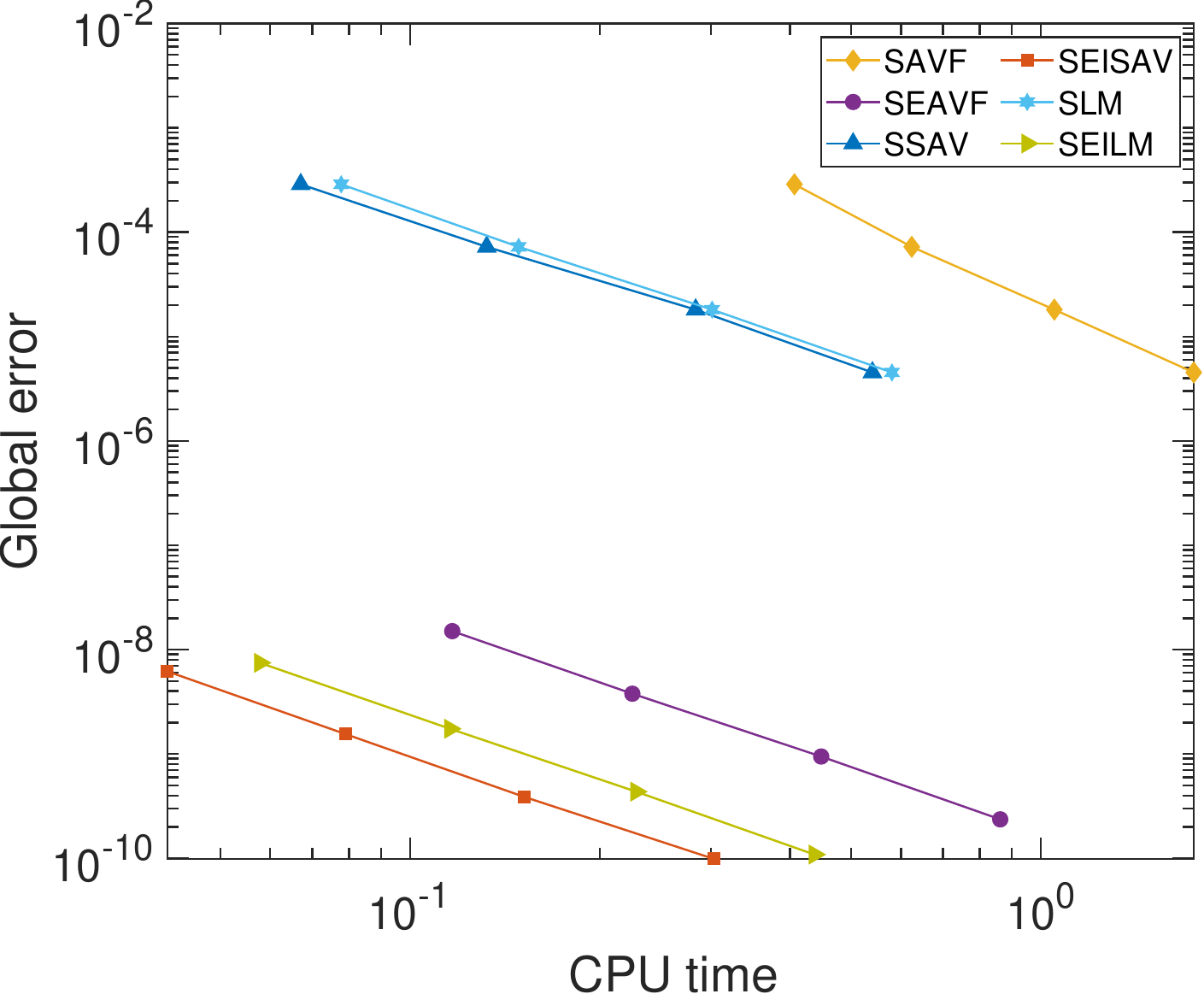}
  }
\caption{Comparison of convergence orders (left) and computational efficiency (right) for the damped gKdV equation.}
  \label{fig:figure6gKdV}
\end{figure}
Figure~\ref{fig:figure6gKdV}  compares the six structure-preserving schemes with different time step sizes \(h=2^{-(j+5)}\) \((j=1,2,3,4)\). 
Figure~\ref{fig:orderC}  shows that all methods achieve a second-order convergence rate. Figure~\ref{fig:efficiencyC} reports the CPU time versus accuracy, where we observe that SEISAV is substantially more efficient than the other schemes. SEILM is less efficient than SEISAV due to solving a nonlinear equation, but it is still more efficient than the other existing schemes. Figure~\ref{fig:Energy decay C} illustrates the evolution of the discrete energy \(H(z^n)\), confirming that all methods reproduce the expected monotone decay. Figure~\ref{fig:Numerical solution C} shows that SEILM and SEAVF track the reference energy more closely than SEISAV, since the Lagrange-multiplier correction enforces energy consistency. In this experiment, however, we find that SEILM is sensitive to parameter choices in \eqref{eq:gkdv-pde}: the Newton iteration used to compute the scalar multiplier \(\eta\) in \eqref{eq:selmsav:core} is not always robust, for instance, when we set $\mu=0.1$ and $T=50$, which can lead to numerical instabilities.
\begin{figure}
  \centering
  \subfloat[Energy decay plot\label{fig:Energy decay C}]{
    \includegraphics[width=0.46\textwidth]{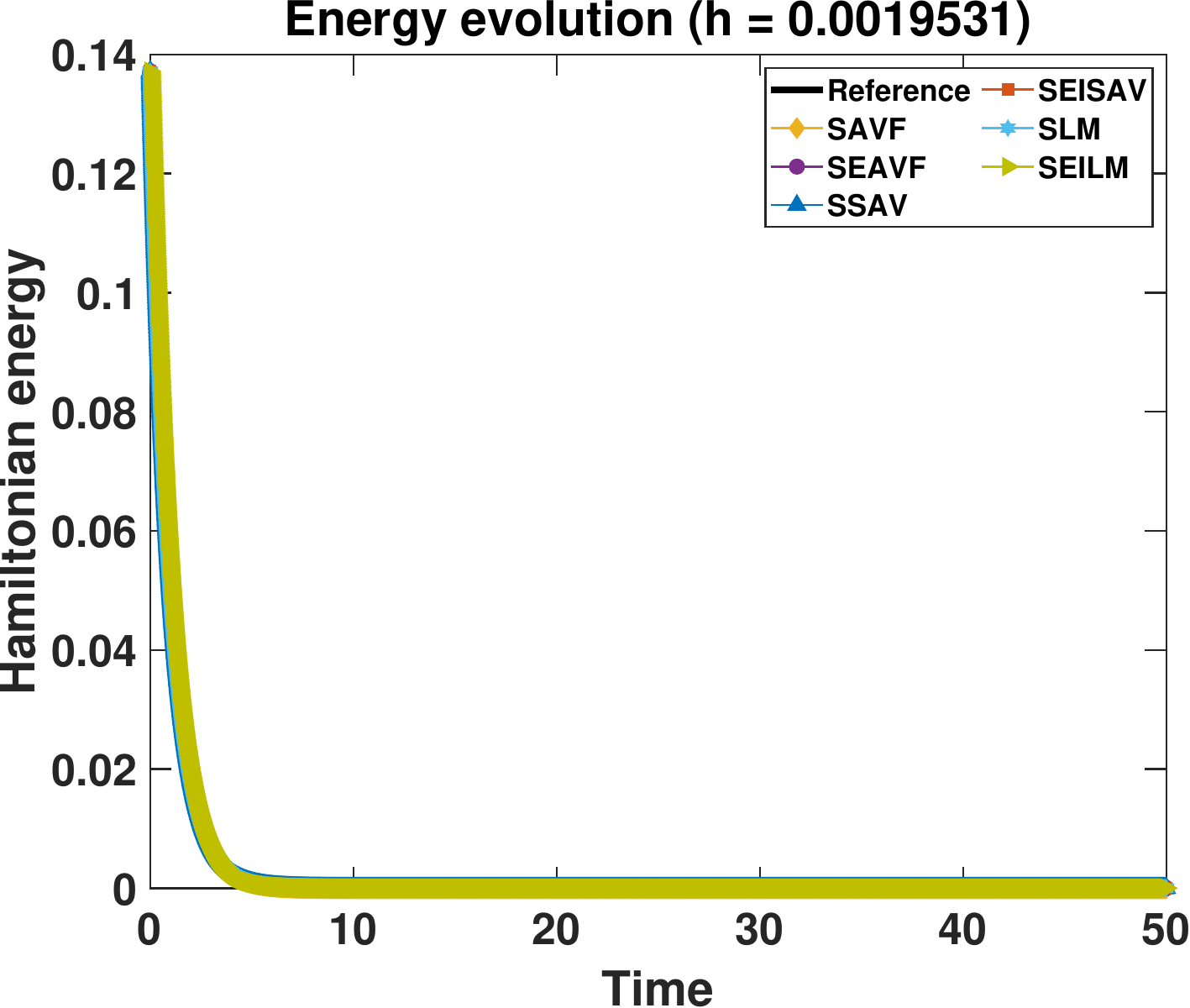}
  }\hfill
  \subfloat[Energy error plot\label{fig:Numerical solution C}]{
    \includegraphics[width=0.46\textwidth]{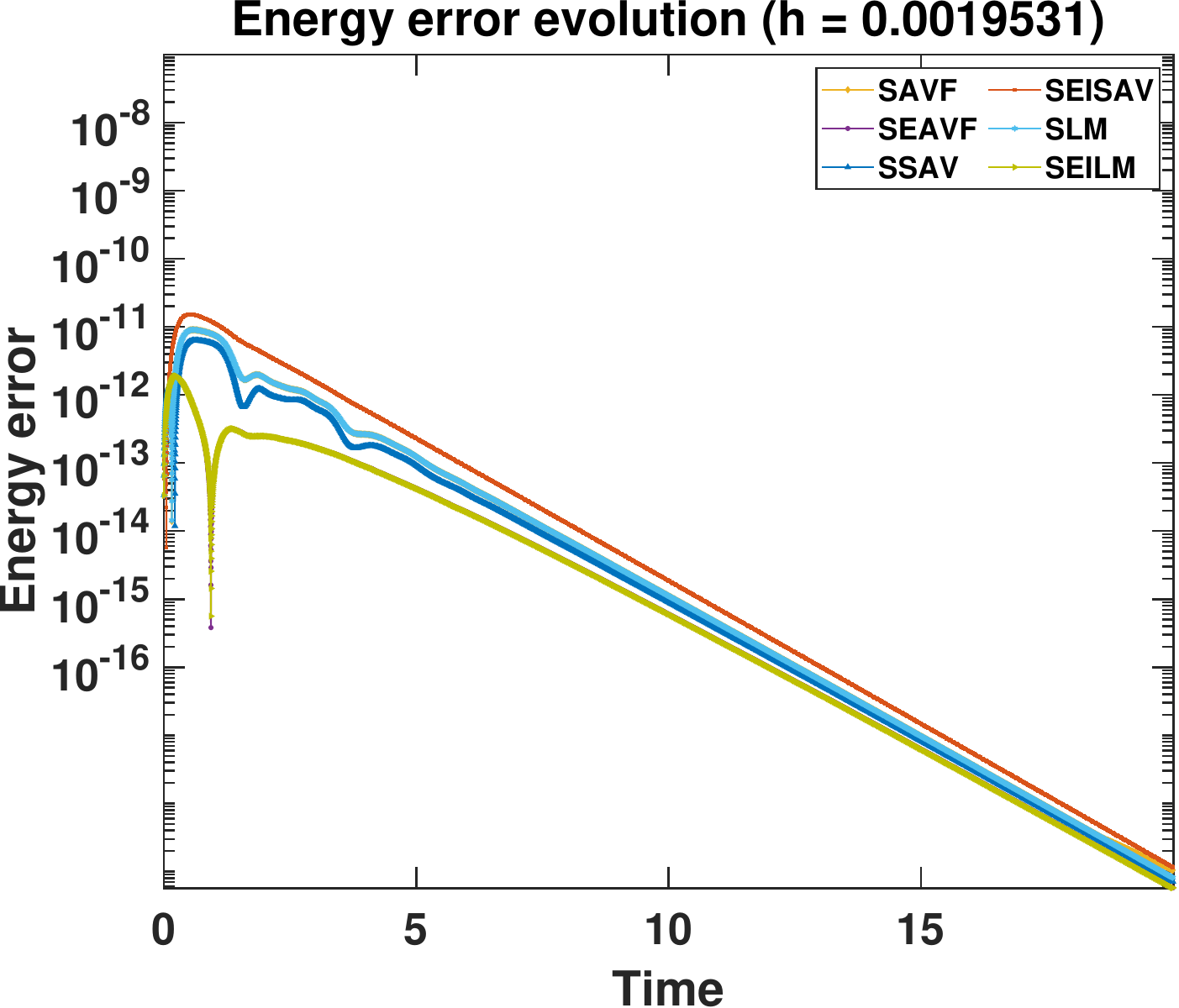}
  }

  \caption{Energy evolution (left) and energy error (right) for the damped gKdV equation.}
  \label{fig:figure7gKdV}
\end{figure}

\section{Conclusion}
We developed a unified framework for constructing efficient energy-stable splitting exponential integrators for damped Hamiltonian systems with linear perturbations, enabling reliable long-time simulation under dissipation. By introducing an energy-induced metric, we formulated an energy-compatible damping condition that ensures monotone decay of the physical energy for a broad class of linearly perturbed Hamiltonian systems. Within this framework, we developed two splitting-based, dissipation-preserving integrators. The SEISAV scheme combines an exact update of the linear damping subflow with an exponential integrator based on SAV discretization for the conservative subflow, and the per-step computational cost reduces to solving a single scalar linear equation. The SEILM scheme further incorporates a Lagrange-multiplier formulation within the exponential integrator  so that the discrete dissipation is enforced for the original energy rather than a modified one, which only requires solving one nonlinear algebraic equation at each time step.

For both schemes, we established unconditional discrete energy stability that is consistent with the continuous energy mechanism, yielding monotone energy dissipation under the above damping condition. 
Numerical experiments on the damped nonlinear Klein-Gordon equation, the continuous \(\alpha\)-FPU system, and the damped gKdV equation demonstrate the expected convergence behavior, reliable dissipation control, and competitive computational efficiency.

The present analysis focuses on fixed step sizes, and extending the framework to variable-step strategies and deriving related energy estimates are natural directions for future work. Another important extension is to accommodate more general damping mechanisms beyond the linear perturbations considered here. Further topics include developing more robust scalar-solver strategies for the Lagrange-multiplier update and constructing higher-order EI schemes within the proposed framework.

\appendix
\section{Proof Details}\label{sec:esav review}
\begin{lemma}\label{lem:exp_congruence}
Let \(M\in\mathbb{R}^{d\times d}\) be symmetric and \(h> 0\). For
\[
B(h)=(e^{hSM})^{\top}Me^{hSM}-M,
\]
the following relation holds:
\[
B(h)=0 \quad \text{if }\quad S^{\top}=-S,
\qquad\qquad
B(h)\preceq 0 \quad \text{if }\quad S\preceq 0.
\]
\end{lemma}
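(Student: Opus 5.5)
The plan is to mirror the proof of Lemma~\ref{lemma-SND}: treat $B$ as a function of a continuous parameter $s\in[0,h]$, differentiate, and integrate. Define
\[
P(s):=(e^{sSM})^{\top}Me^{sSM},\qquad s\in[0,h],
\]
so that $P(0)=M$ and $B(h)=P(h)-P(0)$. Since $\frac{d}{ds}e^{sSM}=SMe^{sSM}$, writing $G(s)=e^{sSM}$ and using $M^\top=M$ gives
\[
\dot P(s)=G(s)^{\top}\bigl(M^{\top}S^{\top}M+MSM\bigr)G(s)=G(s)^{\top}M\bigl(S+S^{\top}\bigr)MG(s).
\]
Thus $\dot P(s)=2\,(MG(s))^{\top}\operatorname{sym}(S)\,(MG(s))$, a congruence of $\operatorname{sym}(S)$.

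Next I split into the two cases.

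\emph{Case $S^\top=-S$.} Here $\operatorname{sym}(S)=0$, so $\dot P\equiv0$. Hence $P(h)=P(0)=M$, which gives $B(h)=0$.

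\emph{Case $S\preceq 0$.} By the paper's convention, $\operatorname{sym}(S)$ is negative semidefinite. A congruence $X^\top\operatorname{sym}(S)X$ preserves this for any matrix $X$, invertible or not, since $x^\top X^\top \operatorname{sym}(S) X x=(Xx)^\top\operatorname{sym}(S)(Xx)\le 0$. So $\dot P(s)\preceq0$ for all $s$. Integrating in the Loewner order, i.e.\ checking $x^\top P(s)x$ is nonincreasing for each fixed $x$, yields $P(h)\preceq P(0)=M$, that is, $B(h)\preceq0$.

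The main obstacle is the derivative computation. One must note that $SM$ is generally not symmetric, so the transpose of $SMG$ is $G^\top M^\top S^\top = G^\top M S^\top$. Together with the right-hand term $G^\top MSMG$, this must be combined into the symmetric expression $M(S+S^\top)M$ sandwiched by $G$; this uses the symmetry of $M$. No positivity of $M$ is needed. The Loewner monotonicity follows by the same argument already used in Lemma~\ref{lemma-SND}.
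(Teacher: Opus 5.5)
Your proposal is correct and is essentially the paper's own proof. The paper likewise sets $N(h)=E(h)^{\top}ME(h)$ with $E(h)=e^{hSM}$ and uses the symmetry of $M$ to compute $\frac{d}{dh}N(h)=E(h)^{\top}M(S^{\top}+S)ME(h)$. It then concludes $N\equiv M$ in the skew-symmetric case, and $N(h)-N(0)=\int_0^h \frac{d}{ds}N(s)\,ds\preceq 0$ in the semidefinite case.
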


\begin{proof}
Let \(E(h)=e^{hSM}\) and \(N(h)=E(h)^{\top}ME(h)\). Then we have \(B(h)=N(h)-N(0)\) since \(N(0)=M\).
Differentiating \(N(h)\) gives
\begin{align*}
\frac{d}{dh}N(h)
&=E'(h)^{\top}ME(h)+E(h)^{\top}ME'(h) \\
&=E(h)^{\top}\Big((SM)^{\top}M+M(SM)\Big)E(h).
\end{align*}
Since \(M^{\top}=M\) and \((SM)^{\top}=M S^{\top}\), we can rewrite the above equation as 
\begin{equation}\label{eq:Nprime_S}
\frac{d}{dh}N(h)=E(h)^{\top}\,M(S^{\top}+S)M\,E(h).
\end{equation}
If \(S^{\top}=-S\), \eqref{eq:Nprime_S} implies \(\frac{d}{dh}N(h)=0\) for all \(h\ge 0\).
Hence \(N(h)\equiv N(0)=M\), and thus \(B(h)=0\).
If \(S\preceq 0\),  for any \(x\in\mathbb{R}^d\) we have 
\[
x^{\top}\frac{d}{dh}N(h)x
=(ME(h)x)^{\top}\,(S^{\top}+S)\,(ME(h)x)\le 0,
\]
implying \(\frac{d}{dh}N(h)\preceq 0\). Integrating from \(0\) to \(h\) yields
\[
N(h)-N(0)=\int_0^h \frac{d}{ds}N(s)\,ds \preceq 0,
\]
and therefore \(B(h)=N(h)-M\preceq 0\).
\end{proof}

\setcounter{prop}{3}
\begin{prop}
\label{prop:LM-SAV_CN_stability-appen}
The discrete energy of  LM method satisfies 
\begin{equation*}
\begin{split}
H(\tilde z^{n+1}) - H(\tilde z^{n})
& =h \left[{\nabla H\Big(\frac{\tilde z^n+\tilde z^{n+1}}{2}\Big)}\right]^{T} \tilde S(\hat{z}^{\,n+\frac{1}{2}})\,\nabla H\Big(\frac{\tilde z^n+\tilde z^{n+1}}{2}\Big)\\
& = h \big(\mu^{n+\frac{1}{2}}, S \mu^{n+\frac{1}{2}}\big), \\
\end{split}
\label{eq:LM-CN_energy_dissipation}
\end{equation*}
and the method is unconditionally energy-stable in the sense that 
\begin{equation*}
H(\tilde z^{n+1})
\left\{
\begin{aligned}
=H(\tilde z^{n})
&\quad \text{if } S^\top=-S,\\
\le H(\tilde z^{n})
&\quad \text{if } S\preceq 0.
\end{aligned}
\right.
\end{equation*}
\end{prop}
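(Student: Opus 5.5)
The plan is to compute $H(\tilde z^{n+1})-H(\tilde z^n)$ directly from the LM/CN scheme, splitting the energy into its quadratic and potential parts, and then to use the scheme's two equations to collapse the increment into $h(\mu^{n+\frac12},S\mu^{n+\frac12})$. Write $H(\tilde z)=\tfrac12 z^\top Mz+V$, where $V$ is the extra coordinate. On the discrete constraint manifold this coordinate equals $V(z^n)$.

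\textbf{Quadratic part.} Since $M$ is symmetric, the difference of squares gives
\[
\tfrac12 (z^{n+1})^\top M z^{n+1}-\tfrac12 (z^{n})^\top M z^{n}=\Big(M\tfrac{z^{n+1}+z^n}{2}\Big)^{\top}(z^{n+1}-z^n).
\]

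\textbf{Potential part.} The third equation of the scheme replaces the potential increment exactly:
\[
V(z^{n+1})-V(z^n)=\eta^{n+\frac12}\nabla V(\hat z^{\,n+\frac12})^\top(z^{n+1}-z^n).
\]

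\textbf{Combining.} Adding the two pieces and using the definition of $\mu^{n+\frac12}$ gives
\[
H(\tilde z^{n+1})-H(\tilde z^n)=(\mu^{n+\frac12})^\top(z^{n+1}-z^n).
\]
Substituting the first equation, $z^{n+1}-z^n=hS\mu^{n+\frac12}$, yields $h(\mu^{n+\frac12},S\mu^{n+\frac12})$, which is the second expression in the statement.

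\textbf{Poisson form.} For the first expression, I evaluate $\nabla H$ of the extended system at the midpoint $\tfrac{\tilde z^n+\tilde z^{n+1}}{2}$. Because $H$ is quadratic in $z$ and linear in the $V$-coordinate, this gradient is $\big[M\tfrac{z^n+z^{n+1}}{2};\,1\big]$. With the frozen structure matrix $\tilde S(\hat z^{\,n+\frac12})$ (using $\eta=\eta^{n+\frac12}$), I factor it as in Proposition~\ref{SAV}: $\tilde S=B^\top SB$ with $B=[I\ \ \eta^{n+\frac12}\nabla V(\hat z^{\,n+\frac12})]$. Then $B\,\nabla H(\text{midpoint})=\mu^{n+\frac12}$, so the quadratic form equals $(\mu^{n+\frac12})^\top S\mu^{n+\frac12}$ and the two expressions agree.

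\textbf{Sign.} If $S^\top=-S$, then $(\mu,S\mu)=0$ and the energy is conserved. If $S\preceq 0$, then $(\mu,S\mu)=(\mu,\operatorname{sym}(S)\mu)\le 0$ and the energy decays.

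The only delicate point is interpretive rather than technical. The scheme's $V$-update holds as a constraint, so the method stays on the manifold $V=V(z)$, and the augmented energy therefore coincides with the original energy $H(z)$. I will state this explicitly and also note that we assume the scalar equation for $\eta^{n+\frac12}$ admits a solution. Solvability is not part of the energy identity, so the proof should not need to establish it.
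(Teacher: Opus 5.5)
Your proof is correct and is essentially the paper's argument. The paper views LM/CN as the Crank--Nicolson (discrete-gradient) discretization of the extended system with $\tilde S$ frozen at $\hat z^{\,n+\frac12}$, kept on the manifold $V=V(z)$ by the constraint, and then simply invokes the discrete-gradient identity. Your split of the increment into the midpoint identity for the quadratic part and the constraint equation for the potential part, followed by $\tilde S=B^\top S B$ with $B\,\nabla H=\mu^{n+\frac12}$, is that identity written out explicitly, and it has the minor merit of not needing to check that the $V$-row of the frozen scheme matches the third LM/CN equation.
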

\begin{proof}
Freeze the structure matrix at $\hat z^{\,n+\frac12}$, i.e., treat $\tilde S(\hat z^{\,n+\frac12})$ as constant within the step.
Then, the CN discretization of the frozen extended system 
\begin{equation}\label{eq:CN-frozen-final}
\frac{\tilde z^{n+1}-\tilde z^{n}}{h}
=\tilde S(\hat z^{\,n+\frac12})\,\nabla H\!\left(\frac{\tilde z^{n+1}+\tilde z^{n}}{2}\right)
\end{equation}
is a discrete-gradient scheme since $H(\tilde z)$ is quadratic in $z$ and affine in the auxiliary component $V$, where  $\tilde z^n=(z^n,V^n)$ is the numerical solution. 
Moreover, with the consistent initialization $V^0=V(z^0)$, the LM constraint, i.e., the third line of LM/CN scheme, implies
$V^{n+1}-V^n=V(z^{n+1})-V(z^n)$, and hence $V^n=V(z^n)$ for all $n$.
Therefore, the LM/CN method can be viewed as the constrained discrete-gradient realization of \eqref{eq:CN-frozen-final} on the constraint manifold $V=V(z)$.
Consequently, the discrete-gradient identity yields
\begin{align*}
H(\tilde z^{n+1})-H(\tilde z^{n})
&=h\Big[\nabla H\!\left(\frac{\tilde z^{n+1}+\tilde z^{n}}{2}\right)\Big]^{\!\top}
\tilde S(\hat z^{\,n+\frac12})
\nabla H\!\left(\frac{\tilde z^{n+1}+\tilde z^{n}}{2}\right)\\
&=h\,(\mu^{n+\frac12})^\top S\,\mu^{n+\frac12}.
\end{align*}
Therefore,  we claim the discrete energy is conserved if $S^\top=-S$, and the discrete energy is dissipated if
$S\preceq0$.
\end{proof}

\setcounter{prop}{4}
\begin{prop}
\label{thm:energy_conservation}
The discrete energy by the linearly implicit EISAV scheme satisfies
\begin{equation*}
\widetilde{H}(z^{n+1}, r^{n+1}) 
\begin{cases}
=~\widetilde{H}(z^{n}, r^{n}), & \text{if } S^\top=-S,\\[4pt]
\le~\widetilde{H}(z^{n}, r^{n}), & \text{if } S \preceq 0.
\end{cases}
\label{eq:EISAV_discrete_energy_law}
\end{equation*}
where the discrete energy function is defined by
\begin{equation*}
\widetilde{H}(z^n, r^n) = \frac{1}{2}(z^n)^{\!T} M z^n + (r^n)^2 - C.
\label{eq:EISAV_discrete_energy}
\end{equation*}
\end{prop}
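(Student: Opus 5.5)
The plan is to recognize the EISAV update as the \emph{exact} flow over one step of an auxiliary linear Hamiltonian system with frozen forcing. Then compare that system's energy with $\widetilde H$. Fix a step and suppose $(z^{n+1},r^{n+1})$ solves \eqref{eq:EISAV}. Set the constant vector
\[
c:=\bar r^{\,n+\frac12}\,g^n=f(\hat z^{\,n+\frac12},\bar r^{\,n+\frac12}),
\]
which, once the step has been solved, is simply a fixed vector in $\mathbb{R}^d$. Consider the affine ODE
\[
\dot y = S\big(My + c\big)=S\,\nabla G(y),\qquad G(y):=\tfrac12 y^\top M y + c^\top y,\qquad y(0)=z^n .
\]

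First, by the variation-of-constants formula,
\[
y(h)=e^{hSM}z^n+\int_0^h e^{(h-s)SM}S c\,ds=e^{A}z^n+h\varphi_1(A)Sc ,
\]
where $A=hSM$. This is exactly \eqref{eq:EISAV_z_update}, so $y(h)=z^{n+1}$. Second, along this flow we have
\[
\tfrac{d}{dt}G(y)=\nabla G(y)^\top S\,\nabla G(y),
\]
which vanishes if $S^\top=-S$ and is $\le 0$ if $S\preceq 0$, as in Proposition~\ref{SAV}. Integrating over $[0,h]$ gives $G(z^{n+1})-G(z^n)=0$, respectively $\le 0$.

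Third, I will show that this increment coincides with the increment of the modified energy. By \eqref{eq:EISAV_r_update},
\[
(r^{n+1})^2-(r^n)^2=2\bar r^{\,n+\frac12}(r^{n+1}-r^n)=\bar r^{\,n+\frac12}(g^n)^\top(z^{n+1}-z^n)=c^\top(z^{n+1}-z^n).
\]
Therefore
\[
\widetilde H(z^{n+1},r^{n+1})-\widetilde H(z^n,r^n)=\tfrac12 (z^{n+1})^\top Mz^{n+1}-\tfrac12 (z^n)^\top Mz^n+c^\top(z^{n+1}-z^n)=G(z^{n+1})-G(z^n).
\]
Combining the three steps yields both cases of the proposition.

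The only delicate point is that $c$ depends on the unknown $r^{n+1}$ through $\bar r^{\,n+\frac12}$. The argument is a posteriori: it requires only that the scalar linear system of Algorithm~\ref{alg:EISAV} be solvable, i.e.\ that the denominator in \eqref{eq:solution} is nonzero. Once a solution exists, $c$ is a fixed vector and the exactness argument applies verbatim.

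As an alternative route, one could expand $\tfrac12(e^Az^n+w)^\top M(e^Az^n+w)$ and use Lemma~\ref{lem:exp_congruence} for the term $(e^A)^\top M e^A-M$. However, the cross terms between $w=h\varphi_1(A)Sc$ and $e^Az^n$ are messy to control directly. I therefore prefer the exact-flow argument above, which absorbs them automatically.
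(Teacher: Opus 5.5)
Your proof is correct, and it reaches the result by a somewhat different route than the paper.

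\textbf{The paper's route.} The paper argues at the matrix level. Assuming $M$ invertible, it sets $\tilde f^{\,n+\frac12}=M^{-1}\bar r^{\,n+\frac12}g^n$ and uses $A\varphi_1(A)=e^{A}-I$ to rewrite the update as $z^{n+1}+\tilde f^{\,n+\frac12}=e^{A}\bigl(z^n+\tilde f^{\,n+\frac12}\bigr)$. It then uses the same identity $(r^{n+1})^2-(r^n)^2=c^\top(z^{n+1}-z^n)$ that you derive, and completes the square. The increment of $\widetilde H$ becomes $\tfrac12 (w^n)^\top\bigl((e^{A})^\top M e^{A}-M\bigr)w^n$ with $w^n=z^n+\tilde f^{\,n+\frac12}$, and the sign follows from Lemma~\ref{lem:exp_congruence}.

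\textbf{How the two relate.} The ``alternative route'' you set aside is essentially the paper's. Its cross terms disappear once you shift by $M^{-1}c$, which is exactly the equilibrium of your affine flow: $G(y)=\tfrac12(y+M^{-1}c)^\top M(y+M^{-1}c)-\tfrac12 c^\top M^{-1}c$. Likewise, Lemma~\ref{lem:exp_congruence} is the homogeneous-flow version of your computation of $\tfrac{d}{dt}G$. So the mechanism is the same in both proofs.

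\textbf{What each version buys.} Your exact-flow formulation never inverts $M$, so it covers singular $M\succeq 0$ directly. This matters for the gKdV example, where $M$ is a multiple of the periodic discrete Laplacian and has the constant vector in its kernel. The paper handles that case by regularizing to $M+\varepsilon I$ and passing to the limit. The paper's version, in turn, packages the argument into one reusable matrix lemma, which it invokes again for EILM. Your argument transfers just as easily there with $c=\eta^{n+\frac12}\nabla V(\hat z^{\,n+\frac12})$.

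\textbf{Your solvability caveat can be dropped.} Apply your own energy argument to $\dot y=S(My+g^n)$ with $y(0)=0$. Then $y(h)=h\varphi_1(A)Sg^n$, and $G(y(h))\le G(0)=0$ gives $h(g^n)^\top\varphi_1(A)Sg^n=(g^n)^\top y(h)\le-\tfrac12\,y(h)^\top My(h)\le 0$. Hence the denominator in \eqref{eq:solution} is always at least $1$, so the step is unconditionally uniquely solvable.
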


\begin{proof}
Denoting by \(E=e^{A}\), we have $A\varphi_1(A)=E-I$. Setting $f^{n+\frac12}=\bar r^{\,n+\frac12} g^n$ and using \(\bar r^{\,n+\frac12}=\tfrac12(r^{n+1}+r^n)\), from EISAV scheme, we can obtain
\begin{align}\label{eq:r_id}
(r^{n+1})^2-(r^n)^2=(f^{n+\frac12})^\top(z^{n+1}-z^n).
\end{align}
Assume first that \(M\) is invertible and set
\(\tilde f^{\,n+\frac12}=M^{-1}f^{n+\frac12}\).
Since \(A=hSM\), we have \(hSf^{n+\frac12}=A\tilde f^{\,n+\frac12}\). Furthermore, we have 
\[
h\varphi_1(A)S f^{n+\frac12}
=\varphi_1(A)\,A\,\tilde f^{\,n+\frac12}
=(E-I)\tilde f^{\,n+\frac12}.
\]
Consequently, from $z^{n+1}=Ez^n+(E-I)\tilde f^{\,n+\frac12}$, we deduce
\begin{align}\label{znext-reform}
z^{n+1}+\tilde f^{\,n+\frac12}=E\bigl(z^n+\tilde f^{\,n+\frac12}\bigr).
\end{align}
Defining \(w^n=z^n+\tilde f^{\,n+\frac12}\), we have \(z^{n+1}+\tilde f^{\,n+\frac12}=Ew^n\). Hence,
using \eqref{eq:r_id} and \(f^{n+\frac12}=M\tilde f^{\,n+\frac12}\), the discrete energy increment satisfies
\begin{align*}
&\tilde H(z^{n+1},r^{n+1})-\tilde H(z^{n},r^{n})\\
&= \frac12\Big((z^{n+1})^\top M z^{n+1} - (z^n)^\top M z^n \Big)
   + (z^{n+1}-z^n)^\top M \tilde f^{\,n+\frac12} \\
&= \frac12\Big((z^{n+1}+\tilde f^{\,n+\frac12})^\top M (z^{n+1}+\tilde f^{\,n+\frac12}) - (z^n+\tilde f^{\,n+\frac12})^\top M (z^n+\tilde f^{\,n+\frac12}) \Big) \\
&= \frac12 (w^n)^\top (E^\top M E - M) w^n.
\end{align*}

The last equality follows from equation \eqref{znext-reform}. Then, by Lemma~\ref{lem:exp_congruence},
we claim that  \(\tilde H(z^{n+1},r^{n+1})=\tilde H(z^{n},r^{n})\) if \(S\) is skew-symmetric, and
\(\tilde H(z^{n+1},r^{n+1})\le \tilde H(z^{n},r^{n})\) if \(S\preceq 0\).

If \(M\) is singular, we can replace \(M\) by
\(M_\varepsilon:=M+\varepsilon I\) with \(\varepsilon>0\), apply the above argument, and obtain the same conclusion by letting \(\varepsilon\to 0\).
\end{proof}
\begin{prop}
\label{energy_conservation-LM-ESAV-appen}
 The linearly implicit scheme EILM satisfies the following discrete energy law:
\begin{equation*}
H(z^{n+1}) 
\begin{cases}
=H(z^{n}),& \text{if } S^\top=-S,\\[4pt]
\le~H(z^{n}), & \text{if } S \preceq 0.
\end{cases}
\label{eq:EILM_discrete_energy_law1}
\end{equation*}
where the discrete energy is 
\begin{equation*}
H(z^{n}) = \frac{1}{2}(z^n)^{\!T} M z^n + V(z^n).
\label{eq:EISAVLM_discrete_energy}
\end{equation*}
\end{prop}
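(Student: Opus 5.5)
The proof mirrors that of Proposition~\ref{thm:energy_conservation} for EISAV. The scalar multiplier plays the role of $\bar r^{\,n+\frac12}$, and the LM constraint replaces the identity \eqref{eq:r_id}.

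Set $E=e^{A}$ and $f^{n+\frac12}:=\eta^{n+\frac12}\nabla V(\hat z^{\,n+\frac12})$. The second line of \eqref{eq:EI_LM_z} then reads
\[
V(z^{n+1})-V(z^n)=(f^{n+\frac12})^\top(z^{n+1}-z^n).
\]
This exact identity replaces \eqref{eq:r_id}. Its left-hand side is the true potential increment, which is why the estimate concerns the original energy $H$ rather than a modified one.

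Assume first that $M$ is invertible, and put $\tilde f^{\,n+\frac12}=M^{-1}f^{n+\frac12}$. Since $A=hSM$ and $\varphi_1(A)A=E-I$, we get
\[
h\varphi_1(A)Sf^{n+\frac12}=(E-I)\tilde f^{\,n+\frac12}.
\]
The first line of \eqref{eq:EI_LM_z} therefore becomes $z^{n+1}+\tilde f^{\,n+\frac12}=E\,w^n$, where $w^n:=z^n+\tilde f^{\,n+\frac12}$.

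Next compute the energy increment:
\[
H(z^{n+1})-H(z^n)=\tfrac12\big((z^{n+1})^\top Mz^{n+1}-(z^n)^\top Mz^n\big)+(z^{n+1}-z^n)^\top M\tilde f^{\,n+\frac12}.
\]
Complete the square using the symmetry of $M$. The cross terms $\pm(\tilde f^{\,n+\frac12})^\top M\tilde f^{\,n+\frac12}$ cancel, so the increment equals
\[
\tfrac12\Big((z^{n+1}+\tilde f^{\,n+\frac12})^\top M(z^{n+1}+\tilde f^{\,n+\frac12})-(w^n)^\top Mw^n\Big)=\tfrac12 (w^n)^\top(E^\top ME-M)w^n.
\]
Lemma~\ref{lem:exp_congruence} shows that $E^\top ME-M$ vanishes if $S^\top=-S$ and is $\preceq0$ if $S\preceq0$. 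This gives equality or decay, respectively.

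If $M$ is singular, the energy $\tfrac12 z^\top Mz+V(z)$ is unchanged by adding a multiple of the identity to $M$ while subtracting the same quadratic from $V$. So we may replace $M$ by $M_\varepsilon=M+\varepsilon I$ and $V$ by $V-\tfrac{\varepsilon}{2}|z|^2$ and repeat the argument. The cleaner alternative, used in the EISAV proof, is to argue with $M_\varepsilon$ and let $\varepsilon\to0$. That passage to the limit is the only delicate point. The scheme itself depends on $M$ through $A$, so one must check continuity of the iterates in $\varepsilon$ for fixed $\eta^{n+\frac12}$. We can avoid this entirely: write $\tilde f$ as any solution of $(E-I)\tilde f=h\varphi_1(A)Sf$ with $M\tilde f$ replaced by $f$ in the increment identity, which works as long as $f$ lies in the range of $M$. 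Otherwise we fall back on the $\varepsilon$-regularization of the system.

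We assume throughout that the scalar equation \eqref{etainEILM} has a real solution $\eta^{n+\frac12}$; existence is taken as given, as in the statement.
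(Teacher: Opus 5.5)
Your argument for invertible $M$ is correct and is essentially the paper's proof: write the LM constraint as $V(z^{n+1})-V(z^n)=(z^{n+1}-z^n)^\top M\tilde f^{\,n+\frac12}$, use $\varphi_1(A)A=E-I$ to get $z^{n+1}+\tilde f^{\,n+\frac12}=Ew^n$, complete the square, and apply Lemma~\ref{lem:exp_congruence}. The singular-$M$ discussion is not needed, because the setting assumes $M\succ0$ and the paper's proof treats only that case; also, your first suggestion of moving $\tfrac{\varepsilon}{2}|z|^2$ from $V$ into $M$ would change the scheme itself (both $A$ and $\nabla V(\hat z^{\,n+\frac12})$ change), so of your fallbacks only the $\varepsilon\to0$ limit with $f^{\,n+\frac12}$ held fixed, or the range-of-$M$ argument, would be valid.
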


\begin{proof}
For exponential integrators, we have
\begin{equation*}\label{EISAVLMeq:phi1_id}
A\varphi_1(A)=E-I,
\end{equation*}
with \(E=e^{A}\). Denote by \(f^{n+\frac12}=\eta^{n+\frac12}\nabla V(\hat z^{\,n+\frac12})\) and
\(\tilde f^{\,n+\frac12}:=M^{-1}f^{n+\frac12}\) if \(M\) is invertible.
Then from EILM, we further have
\begin{equation*}
\begin{split}
z^{n+1}&=Ez^n+(E-I)\tilde f^{\,n+\frac12},\\
V(z^{n+1})-V(z^n)&=(z^{n+1}-z^n)^\top M\tilde f^{\,n+\frac12}.
\end{split}
\end{equation*}
Denoting by \(w^n=z^n+\tilde f^{\,n+\frac12}\), we have \(z^{n+1}+\tilde f^{\,n+\frac12}=Ew^n\).
Therefore, we have 
\begin{align*}
&H(z^{n+1})-H(z^n)\\
&=\frac12\Big((z^{n+1})^\top Mz^{n+1}-(z^n)^\top Mz^n\Big)
+\Big(V(z^{n+1})-V(z^{n+1})\Big)\\
&=\frac12\Big((z^{n+1})^\top Mz^{n+1}-(z^n)^\top Mz^n\Big)
+(z^{n+1}-z^n)^\top M\tilde f^{\,n+\frac12}\\
&=\frac12\Big((z^{n+1}+\tilde f^{\,n+\frac12})^\top M(z^{n+1}+\tilde f^{\,n+\frac12})
-(z^{n}+\tilde f^{\,n+\frac12})^\top M(z^{n}+\tilde f^{\,n+\frac12})\Big)\\
&=\frac12\Big((Ew^{n})^\top MEw^{n}-(w^n)^\top Mw^n\Big)\\
&=\frac12 (w^n)^\top(E^\top M E - M)w^n.
\end{align*}
By Lemma~\ref{lem:exp_congruence}, \(E^\top M E-M=0\) if \(S^\top=-S\), and
\(E^\top M E-M\preceq 0\) if \(S\preceq 0\).
\end{proof}

\bibliographystyle{siamplain}
\bibliography{final}
\end{document}